\DeclareMathAlphabet{\mathpzc}{OT1}{pzc}{m}{it}
\theoremstyle{plain}
\newtheorem{theorem}{\scshape Theorem}
\newtheorem{proposition}[theorem]{\scshape Proposition}
\newtheorem{corollary}[theorem]{\scshape Corollary}
\theoremstyle{definition}
\newtheorem{remark}{\scshape Remark}
\theoremstyle{definition}
\def\bbR{{\mathbb R}}
\def\bbT{{\mathbb T}}
\def\p{\text{\bf\emph{p}}}
\definecolor{grey}{rgb}{0.5,0.5,0.5}
\definecolor{lightgrey}{rgb}{0.9,0.9,0.9}
\definecolor{darkgreen}{rgb}{0,0.6,0}
\definecolor{orange}{rgb}{1,0.5,0}
\def\p{{\partial\hspace{1pt}}}
\def\({{(\hspace{-2pt}(}}
\def\){{)\hspace{-2pt})}}
\def\jump#1{{[\hspace{-2pt}[#1]\hspace{-2pt}]}}
\def\XXint#1#2#3{{\setbox0=\hbox{$#1{#2#3}{\int}$}
\vcenter{\hbox{$#2#3$}}\kern-.5\wd0}}
\title{A model for Rayleigh-Taylor mixing and interface turn-over}
\author{Rafael Granero-Belinch\'on
\\Department of Mathematics
\\University of California
\\Davis, CA 95616 USA
\\{\footnotesize email: rgranero@math.ucdavis.edu}
\and
 Steve Shkoller
\\Department of Mathematics
\\University of California
\\Davis, CA 95616 USA
\\{\footnotesize email: shkoller@math.ucdavis.edu}
}
\date{July 5, 2015}
\begin{document}


\maketitle

\begin{center} {\bf Abstract}
\end{center}

We first develop a new mathematical model for two-fluid interface motion, subjected to the  Rayleigh-Taylor (RT) instability in two-dimensional fluid flow, which in its simplest form, is given by
$ h_{tt}( \alpha ,t)  = A g\, \Lambda h  -  \frac{ \sigma }{ \rho^++\rho^-} \Lambda^3 h
 - A \p_\alpha(H h_t h_t) $, where $\Lambda = H \p_ \alpha $ and $H$ denotes the Hilbert transform.   In this so-called  $h$-model,
 $A$ is the Atwood number, $g$ is the acceleration,  $ \sigma $ is surface tension, and $\rho^\pm$ denotes the densities of the two fluids.
  We derive our $h$-model using asymptotic expansions in the
Birkhoff-Rott
integral-kernel formulation for the evolution of an interface separating two incompressible and irrotational fluids.  The resulting  $h$-model equation is shown to be locally and globally
well-posed in Sobolev spaces when a certain {\it stability condition} is satisfied; this stability condition requires that the  product of the Atwood number and the 
initial velocity field  be positive.   The asymptotic  behavior of these global solutions, as $t \to \infty $,  is also described.
The $h$-model equation is shown to have  interesting balance laws, which distinguish the stable dynamics from the  unstable dynamics.
Numerical simulations of the  $h$-model  show that the interface can quickly grow due to nonlinearity, and then stabilize when the lighter fluid is 
on top 
of the heavier fluid and acceleration is directed downward.   In the unstable case of a heavier fluid being supported by the lighter fluid, we find good agreement for the growth of the mixing
layer with experimental data in the ``rocket rig'' experiment of Read of Youngs.

We then derive an interface model for RT instability,  with a general  parameterization $z( \alpha ,t) = \left( z_1(\alpha ,t), z_2 (\alpha ,t)\right)$
such that $z$ satisfies
$
z_{tt}= \Lambda\bigg{[}\frac{A}{|\partial_\alpha z|^2}H\left(z_t\cdot (\partial_\alpha z)^\perp H(z_t\cdot (\partial_\alpha z)^\perp)\right)
 + \frac{\jump{p}}{\rho^++\rho^-}  + A g z_2 \bigg{]} \frac{(\partial_\alpha z)^\perp}{|\partial_\alpha z|^2} 
  +z_t\cdot (\partial_\alpha z)^\perp\left(\frac{(\partial_\alpha z_t)^\perp}{|\partial_\alpha z|^2}-\frac{(\partial_\alpha z)^\perp 2(\partial_\alpha z\cdot \partial_\alpha z_t)}{|\partial_\alpha z|^4}\right)$.
 This more general RT $z$-model allows for interface turn-over.  Numerical simulations  of the $z$-model show an even better agreement with  the predicted mixing layer growth for the ``rocket rig'' experiment.

 \vspace{.3 in}
 
 \tableofcontents
 
\section{Introduction}
The instability of a heavy fluid layer supported by a light one is generally known as Rayleigh-Taylor
(RT) instability (see Rayleigh \cite{Ra1878} and Taylor \cite{Ta1950}).  It can occur under gravity and, equivalently, under an acceleration of the fluid system
in the direction toward the denser fluid; in particular,  RT is an interface fingering instability, which occurs when a perturbed
interface, between two fluids of different
density, is subjected to a normal pressure gradient. Whenever the pressure is higher in the lighter
fluid, the differential acceleration causes the two fluids to mix.  See Sharp \cite{Sharp1984}, Youngs \cite{Youngs1984,Youngs1989}, and Kull \cite{Kull1991} for
an overview of the RT instability.

The Euler equations of inviscid  hydrodynamics serve as the basic mathematical model for  RT  instability and mixing between two fluids.   This highly unstable 
system of conservation laws is both difficult to analyze (as it is ill-posed in the absence of surface tension and viscosity) and difficult to computationally simulate at the small spatial scales of RT mixing.   As such,
our objective is to develop  {\it model equations}, which can be used to predict the RT mixing layer and growth rate.

In order to derive our RT interface models, we shall assume both incompressible and irrotational flow for the two-fluid Euler equations.   Rather than proceeding with a direct 
approximation of the Euler equations, we shall instead work with the equivalent Birkhoff-Rott singular integral-kernel formulation for the evolution of the material interface.  We have found
that this formulation possesses  a certain robustness in regards to approximations founded upon expansions in various parameters.

In the simplest case, in which the interface is modeled as a graph $( \alpha , h( \alpha , t))$ of a signed {\it height} function $h( \alpha , t)$, 
Our approach yields a second-order in-time quadratically nonlinear wave equation for the position of the interface, the $h$-model, which is given
by
$$ h_{tt}( \alpha ,t)  = A g\, \Lambda h  -  \frac{ \sigma }{ \rho^++\rho^-} \Lambda^3 h
 - A \p_\alpha(H h_t h_t) \,,$$
  where $\Lambda = H \p_ \alpha $ and $H$ denotes the Hilbert transform.   In this $h$-model equation,
 $A$ denotes the Atwood number, $g$ is the acceleration,  $ \sigma \ge 0$ is the surface tension, and $\rho^\pm>0$ denotes the densities of the two fluids.

As we will describe below, our $h$-model equation for RT instability is  both locally and globally  well-posed in Sobolev spaces when  a stability condition is satisfied, which
requires the  product of the Atwood number and the initial velocity field to be  positive.   Under such a stability condition, we also derive the
asymptotic behavior of  solutions to the $h$-model as $t \to \infty $.
A number of interesting energy laws are also derived, which distinguish between stable and unstable interface dynamics

Numerical simulations are performed, which show that the $h$-model is capable of producing remarkable growth of the interface, followed by 
(possibly oscillatory) decay to a rest
state in the case that the lighter fluid is supported by the heavier fluid. 
In the highly unstable case, where a heavy fluid is supported by the lighter fluid, 
 we perform the classical ``rocket rig'' experiment of Read \cite{Read1984} and  Youngs \cite{Youngs1989} for the case of
unstable RT mixing-layer growth, and find very good agreement for the growth rates with the predicted quadratic growth rate of
Youngs \cite{Youngs1989}, and with both Direct Numerical Simulations
and experimental data.  

Finally, in order to allow for the interface to turn-over (rather than simply remaining a graph) we develop a more general $z$-model for the interface 
parameterization $z( \alpha ,t) = \left( z_1(\alpha ,t), z_2(\alpha ,t)\right)$ which satisfies
\begin{align*} 
z_{tt}&= \Lambda\bigg{[}\frac{A}{|\partial_\alpha z|^2}H\left(z_t\cdot (\partial_\alpha z)^\perp H(z_t\cdot (\partial_\alpha z)^\perp)\right)
 + \frac{\jump{p}}{\rho^++\rho^-}  + A g z_2 \bigg{]} \frac{(\partial_\alpha z)^\perp}{|\partial_\alpha z|^2} \\
&\qquad \qquad
  +z_t\cdot (\partial_\alpha z)^\perp\left(\frac{(\partial_\alpha z_t)^\perp}{|\partial_\alpha z|^2}-\frac{(\partial_\alpha z)^\perp 2(\partial_\alpha z\cdot \partial_\alpha z_t)}{|\partial_\alpha z|^4}\right) \,.
\end{align*} 
Rather than constraining the interface amplitude to grow at the RT instability, the $z$-model can allow for interface turn-over.  We perform
numerical simulations to demonstrate the improvement afforded by this more general model, and the even better accuracy in matching the 
predicted growth of the RT mixing layer for the ``rocket rig'' experiment.  We also perform the so-called ``tilted rig'' experiment, in which the tank
holding the fluid is titled by a small angle away from vertical.  Again, our simulations demonstrate good qualitative agreement with DNS.
To conclude, we also perform a numerical simulation for a  Kelvin-Helmholtz problem,  for which the Atwood number is set to zero to prevent 
an RT 
instability; starting from a steep mode-$1$ wave, we show  roll-over of the interface with a very localized energy spectrum.


\section{Equations for multi-phase flow} The Euler equations are a system of conservation laws, modeling the dynamics of multi-phase inviscid fluid flow.   
For incompressible two-dimensional motion, 
the conservation of momentum for an homogeneous, inviscid fluid can be written as
\begin{equation}\label{eqa1}
\rho(u_t+(u\cdot \nabla)u)+\nabla p=-g\rho e_2,
\end{equation}
where $u=(u_1,u_2)$  denotes the velocity vector field of the fluid, 
$p$ is the scalar pressure function, $\rho$ is the density, $g$ is the acceleration,  $\nabla=(\partial_{x_1},\partial_{x_2})$ is the gradient vector, and
$e_2=(0,1)$.

For incompressible flow, the conservation of mass is given by
\begin{equation}\nonumber
\nabla\cdot u=0\,.
\end{equation}
We shall further assume that the fluid is irrotational so that
\begin{equation}\label{eqa3}
\omega:= \operatorname{curl} u=0,
\end{equation}
where $\omega$ is the vorticity of the fluid, and $ \operatorname{curl}  u = \partial_1 u^2 - \partial_2 u^1$.

We assume that there are two fluids with densities  $\rho^+$ and $\rho^-$, separated by a material interface $\Gamma(t)$, where $t$ denotes an instant of time 
in the interval $[0,T]$.  As shown in Figure \ref{fig0}, the fluid with density $\rho^+$ lies above $\Gamma(t)$, while the fluid with density $\rho^-$ lies below  $\Gamma(t)$.
\begin{figure}[h]
\begin{center}
\begin{tikzpicture}[scale=0.5]
    \draw (21,1.) node { $\Gamma(t)$}; 
    \draw (17.2,2.) node { $u^+, \rho^+$}; 
    \draw (15.2,0.) node { $u^-,\rho^-$}; 
    \draw[color=blue,ultra thick] plot[smooth,tension=.6] coordinates{( 10,0) (11,2) (13,-1) (15, 3) (17, 0) (19, 2) (21,-1) };
\end{tikzpicture} 
\end{center}
\vspace{-.3 in}
\caption{{\footnotesize The blue curve is an illustration of the interface $\Gamma(t)$,  separating two fluids at a time $t \in [0,T]$.   The fluid on top of  $\Gamma(t)$ has density $\rho^+$, while the fluid on the bottom has density $\rho^-$.}}
 \label{fig0}
\end{figure}
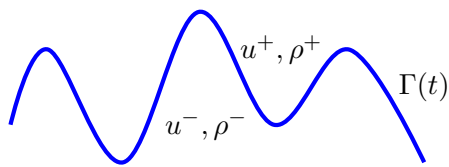

For each instant of time $t\in[0,T]$, the 
interface $\Gamma(t)$ is parameterized by a function $z( \alpha , t)$.   We will provide a special choice for the parameterization $z( \alpha ,t)$ below.   

We denote the jump of a field variable $f(x,t)$ across $\Gamma(t)$ by
$$\jump{f}=f^+-f^-\,.$$
We let  $n$  and $ \tau$ denote the unit normal and tangent vectors to $\Gamma(t)$, respectively; for the RT instability, we have the following jump conditions:
\begin{equation}\label{jump_cond}
\jump{u\cdot n}=0\,, \ \ \ \jump{u\cdot \tau}\neq 0 \,, \ \ \text{ on } \Gamma(t) \,.
\end{equation} 
Note, that  $\partial_\alpha z( \alpha ,t)$ is a tangent vector to $\Gamma(t)$ at the point $z( \alpha ,t)$, so that
$$
\jump{u\cdot \partial_\alpha z}\neq 0.
$$ 
Consequently, the velocity is not continuous at the interface. Due to the fact that the {\it shape} of the interface is determined only by the normal  component of the
fluid velocity, the motion of the interface has a tangential reparameterization symmetry, so that we can 
add any tangential velocity to the motion of the interface, with the hope that this will simplify the analysis.   As such, 
 the evolution equation for the parameterization of $\Gamma(t)$ is written as
\begin{equation}\label{eqb}
z_t( \alpha  ,t) =u(z( \alpha ,t),t)+c( \alpha ,t) \partial_\alpha z(\alpha ,t),
\end{equation}
where the function $c(\alpha ,t)$ will be specified below.

\section{The integral kernel formulation}
A number of modal models have been proposed for the evolution of the RT mixing layer; see, for example, Rollin \& Andrews \cite{RoAn2013} and
Goncharov \cite{Goncharov2002}, and the references therein.  These modal models are based on a modal decomposition and approximation of the
evolution equations for the parameterization of the interface and the velocity potential; such models   consist of a large coupled system of nonlinear ODEs for a finite set of Fourier modes.   

Rather than developing a modal model for RT mixing and approximating the partial differential equations themselves, 
we shall take another approach to the development of an RT model,  which is founded upon the Birkhoff-Rott  integral-kernel formulation 
for the evolution of an interface, separating two incompressible and irrotational fluids.  

In order to introduce the integral-kernel formulation, which consists of {\it singular integrals}, we 
 begin by defining the {\it principal value integral } of a given function $f$ as
$$
\text{P.V.}\int_\bbR f (\beta) d \beta =\lim_{\epsilon\rightarrow0^+}\int_{(-1/\epsilon,-\epsilon)\cup(\epsilon,1/\epsilon)} f (\beta) d  \beta \,.
$$
The well known Biot-Savart kernel $\mathcal{K}_{\small \operatorname{BS}}$ is an integral representation for  $\nabla^\perp\Delta^{-1}$  where 
$\nabla^\perp=(-\partial_{x_2},\partial_{x_1})$, and $ \Delta = \partial_{x_1}^2 + \partial_{x_2} ^2$ denotes the Laplace operator in the plane.
 In other words, if the two fluids fill the plane, then $\Delta^{-1}$ is given by the Newtonian potential and the 
Biot-Savart kernel is given by
\begin{equation}\label{eq2}
\mathcal{K}_{\small \operatorname{BS}}(x,y)=\frac{1}{2\pi}\nabla^\perp \log(x)=\frac{1}{2\pi}\left(-\frac{x_2-y_2}{(x_2-y_2)^2+(x_1-x_1)^2}, \frac{x_1-y_1}{(x_2-y_2)^2+(x_1-y_1)^2}\right).
\end{equation}
Similarly, if the fluid flow is periodic in the horizontal variable, then
\begin{equation}\label{eq2b}
\mathcal{K}_{\small \operatorname{BS}}(x,y)=\frac{1}{4\pi} \left(\frac{-\sinh(x_2-y_2)}{\cosh(x_2-y_2)-\cos(x_1-y_1)},\frac{\sin(x_1-y_1)}{\cosh(x_2-y_2)-\cos(x_1-y_1)}\right).
\end{equation}

Due to the characteristics of the irrotational flow, the {\it vorticity} is a measure which is supported on the interface $\Gamma(t)$,  written as
$$
\omega=\varpi \delta_{\Gamma(t)} \,,
$$
where $\delta_{\Gamma(t)} $   is the Dirac delta function supported on the interface $\Gamma(t)$. 
More precisely, the vorticity $\omega$ is a distribution defined as follows:  for all smooth test functions $\varphi$ with compact support,
$$
\omega(\varphi) = \int_ \mathbb{R}   \varpi(\beta,t)  \varphi( z( \beta , t)) d \beta \,.
$$
The function  $\varpi$ is the {\it amplitude of the vorticity} along $\Gamma(t)$.
Notice that $\varpi$ is minus the jump of the velocity in the tangential direction:
$$
\varpi=-\jump{u\cdot \partial_\alpha z}.
$$

Given the vorticity measure $\varpi$, we can reconstruct the velocity field everywhere; specifically, 
we have, thanks to the Biot-Savart law,  that
\begin{equation}\label{eq3}
u(x,t)=\text{P.V.}\int_\bbR \varpi(\beta)\mathcal{K}_{\small \operatorname{BS}}(x,z(\beta,t))d\beta.
\end{equation}

Since by \eqref{eqa3},  the velocity is irrotational, there exist a velocity potential function $\phi:\bbR^2\rightarrow\bbR$ such that $u^\pm=\nabla \phi^\pm$.    The potential $\phi$ satisfies Bernoulli's equation
\begin{equation}\label{bernoulli}
\phi_t + {\frac{1}{2}} | \nabla \phi|^2 + \frac{p}{\rho} + g x_2 = f(t) \,,
\end{equation} 
and $\phi$ is related to $\varpi$ by the singular integral equation
\begin{equation}\label{eq8}
\phi(x,t)={\frac{1}{2\pi}}\text{P.V.}\int_\bbR \varpi(\beta)\arctan\left(\frac{x_2-z_2(\beta,t)}{x_1-z_1(\beta,t)}\right)d\beta.
\end{equation}

As the kernel $\mathcal{K}_{\small \operatorname{BS}}(x,z(\beta,t))$ is singular for $x=z(\alpha,t)$, it follows
that the tangential component of  $u$ is discontinuous across the interface $\Gamma(t)$.  By similar reasoning, the potential function $\phi$ is 
also discontinuous across $\Gamma(t)$. {Moreover, the vorticity $\varpi$ along the interface $\Gamma(t)$ is related to the jump of the velocity potential by
\begin{equation}\label{ss1}
\varpi = - \partial _ \alpha \jump{\phi} \,.
\end{equation} 
}

Now, using the parameterization $z( \alpha , t)$ for the interface $\Gamma(t)$, we define the Birkhoff-Rott principal-value integral, denoting
$\int_ \mathbb{R}  $ by $ \int$, as
\begin{equation}
\label{eq1}
\mathcal{K}_{\small \operatorname{BR}}( \alpha,t )=\text{P.V.}\int \varpi(\beta,t) \mathcal{K}_{\small \operatorname{BS}}(z(\alpha),z(\beta,t))d\beta.
\end{equation}
In particular, \eqref{eq1} is explicitly given by
\begin{equation}
\label{eq1prime}
\mathcal{K}_{\small \operatorname{BR}}(\alpha,t )
=\frac{1}{2\pi}\text{P.V.}\int \varpi(\beta) \left(-\frac{z_2(\alpha,t)-z_2(\beta,t)}{|z(\alpha,t)-z(\beta,t)|^2}, \frac{z_1(\alpha,t)-z_1(\beta,t)}{|z(\alpha,t)-z(\beta,t)|^2}\right)d\beta.
\end{equation}
The Birkhoff-Rott function $\mathcal{K}_{\small \operatorname{BR}}(\alpha , t)$ denotes the average velocity at a given point  $\alpha \in \Gamma(t)$, so that
$$
\mathcal{K}_{\small \operatorname{BR}}=\frac{u^+ + u^-}{2}\text{ on the interface $z(\alpha,t)$}.
$$

\section{Dynamics of the interface and vorticity amplitude}

We  now formulate the dynamics of the interface.  Since $z_t = u + c \partial_ \alpha z$, we see that
\begin{equation}\label{z-dynamics}
z_t(  \alpha ,t) = {\frac{1}{2\pi}} {P.V.} \int \varpi( \beta) \frac{(z( \alpha ,t) - z(\beta,t))^\perp}{| z( \alpha ,t) - z(\beta,t)|^2} d\beta + c( \alpha , t) \partial_ \alpha 
z (\alpha , t) \,.
\end{equation}

In order to compute $\varpi_t$, we
use the identity \eqref{ss1}, and shall need to compute $\partial_t \jump{\phi}$.  To do so, we shall first compute 
compute $\jump{\phi}$ across the interface $\Gamma(t)$.      In fact, as we will explain below, it is easier to compute  
$\partial_ \alpha \jump{\phi}$,
and to do so, we study the limit of $\partial_ \alpha 
\phi^\pm(y,t)$ as the point $y$ approaches $\Gamma(t)$ in a direction normal to $\Gamma(t)$.    We recall that $ \partial_ \alpha z( \alpha ,t)$ is a 
tangent vector to $\Gamma(t)$ at the point $ z(\alpha ,t)$ and hence $ \partial_ \alpha^\perp z( \alpha ,t)  $ is a normal vector to
$\Gamma(t)$ at the point $ z(\alpha ,t)$.   For $0 < \epsilon \ll 1$,  and each time $t$,  we let
$$
y^\pm_ \epsilon( \alpha , t) := z( \alpha , t) \pm \epsilon  \partial_ \alpha^\perp z( \alpha ,t) 
$$
denote a sequence of points converging to $z( \alpha ,t)$ as $  \epsilon \to 0$ in the normal direction to $\Gamma(t)$.   Then,  the trace of
$\partial_ \alpha \phi^\pm$ on $\Gamma(t)$ is defined as the limit as $  \epsilon \to 0$ of the sequence $\partial_ \alpha \phi^\pm( y_ \epsilon ^\pm,t)$.

Since  $u^\pm = \nabla \phi^\pm$,  the chain-rule shows that
\begin{equation}\label{ss777}
\lim_{\epsilon\rightarrow0}\partial_\alpha \phi^\pm(y_ \epsilon ^\pm,t)
=\lim_{\epsilon\rightarrow0}u^\pm( y_ \epsilon ^\pm)\cdot\partial_\alpha y_ \epsilon ^\pm
=\lim_{\epsilon\rightarrow0}u^\pm( y_ \epsilon ^\pm)\cdot\partial_\alpha z \,.
\end{equation} 
In \cite{CoCoGa2010}, it was shown that
\begin{subequations}\label{u-lim}
\begin{align} 
\lim_{\epsilon\rightarrow0}u^+( y_ \epsilon ^+)\cdot\partial_\alpha z
& =\mathcal{K}_{\small \operatorname{BR}}( \cdot \partial_\alpha z - \frac{1}{2}\varpi \,, \\
\lim_{\epsilon\rightarrow0}u^-( y_ \epsilon ^-)\cdot\partial_\alpha z
& =\mathcal{K}_{\small \operatorname{BR}}\cdot \partial_\alpha z + \frac{1}{2}\varpi \,.
\end{align} 
\end{subequations}
Thus, from \eqref{ss777} and \eqref{u-lim}, it follows that
\begin{equation}\label{ss778}
\partial_ \alpha \jump{\phi} = \mathcal{K}_{\small \operatorname{BR}}\cdot \partial_\alpha z \,.
\end{equation} 
Next, we observe that $ \mathcal{K}_{\small \operatorname{BR}}\cdot \partial_\alpha z$ can be written as an exact derivative due to the
identity
\begin{equation}\label{ss779}
\frac{(z( \alpha ,t) - z(\beta,t))^\perp}{| z( \alpha ,t) - z(\beta,t)|^2}\cdot \partial_\alpha z(\alpha) 
=\partial_\alpha \arctan\left(\frac{z_2(\alpha)-z_2(\beta)}{z_1(\alpha)-z_1(\beta)}\right) \,,
\end{equation} 
which means that due to \eqref{ss779}, the identity \eqref{ss778} can be integrated, and 
\begin{equation}\label{eq9}
\jump{\phi} =\text{P.V.}\int_\bbR \varpi(\beta)\arctan\left(\frac{z_2(\alpha)-z_2(\beta)}{z_1(\alpha)-z_1(\beta)}\right)d\beta\,.
\end{equation} 
%

We define the Atwood number 
$$
A=\frac{\rho^+-\rho^-}{\rho^++\rho^-} \,.
$$
Using \eqref{eq9} together with Bernoulli's equation \eqref{bernoulli}, we can derive the equation for $\partial_t \jump{\phi}$. 
After computing  $\partial_ \alpha \partial_t \jump{\phi}$, and using \eqref{jump_cond}, \eqref{ss1} and \eqref{z-dynamics}, 
a lengthy computation shows that 
\begin{align}
\varpi_t&=-\partial_\alpha\bigg{[}
 \frac{A}{4\pi^2} \left|\int \varpi( \beta) \frac{(z( \alpha ,t) - z(\beta,t))^\perp}{| z( \alpha ,t) - z(\beta,t)|^2} d\beta\right|^2
 -\frac{A}{4} \frac{\varpi( \alpha ,t)^2}{|{\partial_\alpha} z( \alpha ,t)|^2}
\nonumber\\
&\quad \qquad
+  \frac{A}{\pi}  \int \varpi( \beta) \frac{(z( \alpha ,t) - z(\beta,t))^\perp}{| z( \alpha ,t) - z(\beta,t)|^2}  \cdot \partial_ \alpha z(\alpha ,t) c( \alpha ,t) d\beta 
\nonumber \\
&\qquad \qquad
-c( \alpha ,t) \varpi( \alpha ,t)  - \frac{2\jump{p}}{\rho^++\rho^-}  -2 A g z_2 \bigg{]} \nonumber \\
& \qquad \qquad
+\frac{A}{\pi} \partial_t\bigg{[}
 \int \varpi( \beta) \frac{(z( \alpha ,t) - z(\beta,t))^\perp}{| z( \alpha ,t) - z(\beta,t)|^2}  \cdot \partial_ \alpha z(\alpha ,t)  d\beta  
\bigg{]} \,.
\label{eq11b}
\end{align}

\section{The case that the interface $\Gamma(t)$ is a graph}

We now assume that the interface $\Gamma(t)$ is the graph of the signed height function $h( \alpha ,t)$, so that the parameterization $z( \alpha , t)$ is given by
\begin{equation}\label{h}
\left( z_1 ( \alpha ,t), z_2 (\alpha ,t)\right) = \left(\alpha , h( \alpha ,t) \right) \,.
\end{equation} 

If at time $t=0$, $\Gamma(0)$ is given as the graph $( \alpha , h(\alpha ,0))$, then we can ensure that $\Gamma(t)$ stays a graph for future time $t>0$ by making
an explicit choice of the function $c( \alpha , t)$ in \eqref{eqb}.   To this end, we define
\begin{equation}\label{ss0}
c( \alpha , t) = {\frac{1}{2\pi}}  P.V. \int \varpi( \beta) \frac{ h( \alpha , t) - h( \beta , t) } { (\alpha - \beta )^2 + \left( h(\alpha ,t) - h( \beta , t) \right)^2} d \beta \,.
\end{equation}

\def\BR{\mathcal{K}_{\small \operatorname{BR}}( \alpha,t )  }
\def\p{\partial}
With $\Gamma(t)$ given by $ ( \alpha , h(\alpha ,t)$,  the definition of the Birkhoff-Rott function $\mathcal{K}_{\small \operatorname{BR}}( \alpha,t )$ in (\ref{eq1}) simplifies to
\begin{equation}\label{ss2}
\BR (\alpha ,t) = \frac{1}{2\pi} P.V. \int \frac{ \varpi( \beta ,t)}{ \alpha - \beta } \left( \frac{- \frac{h( \alpha ,t) - h( \beta , t)}{ \alpha - \beta }}{1+ \left(\frac{h( \alpha ,t) - h( \beta ,t)}{\alpha - \beta } \right)^2}  \,,
\frac{1}{1+ \left(\frac{h( \alpha ,t) - h( \beta ,t)}{\alpha - \beta } \right)^2}  \right) \,,
\end{equation} 
the evolution equation for $\Gamma(t)$ can be written in terms of the height function $h( \alpha , t)$ as
\begin{equation}\label{ss3}
h_t( \alpha ,t) =  \frac{1}{2\pi} P.V. \int \frac{ \varpi( \beta ,t)}{ \alpha - \beta } \frac{1}{1+ \left(\frac{h( \alpha ,t) - h( \beta ,t)}{\alpha - \beta } \right)^2} d \beta + 
c(\alpha ,t) \p_\alpha h(\alpha ,t) \,,
\end{equation} 
and  the evolution equation for vorticity $\varpi$ on $\Gamma(t)$ takes the form
\begin{align} 
\varpi_t( \alpha ,t) & =  \frac{A}{4} \p_ \alpha \left(\frac{ \varpi^2( \alpha ,t)}{1+ (\p_\alpha h(\alpha ,t))^2} \right) + \p_ \alpha(c( \alpha ,t) \varpi( \alpha ,t))  
+ \frac{2}{ \rho^++\rho^-} \p_ \alpha \jump{p} \nonumber \\
& \qquad 
+2 A \p_ \alpha  \BR \cdot (1, \p_ \alpha h) c(\alpha ,t) + 2A g \p_ \alpha h + 2 A \p_t \BR \cdot (1, \p_ \alpha h) \,.
\label{ss4}
\end{align} 

\section{A  simple model equation for RT instability}  In this section, we shall derive a model of RT instability in which the interface 
$\Gamma(t)$ is a graph of the height function $h( \alpha ,t)$.
In order to proceed with our derivation,  we shall make further approximations to the coupled system of equations (\ref{ss3}) and (\ref{ss4}).

\subsection{The equations in the linear regime}
In the linear regime, equations (\ref{ss3}) and (\ref{ss4}) reduce to the following coupled system:
\begin{subequations}\label{linear}
\begin{align} 
h_t( \alpha ,t) & =  {\frac{1}{2}} H \varpi \,, \\
\varpi_t( \alpha ,t) & = +2A g \p_ \alpha h  +  \frac{2 \sigma }{ \rho^++\rho^-} \p_ \alpha ^3 h \,,
\end{align} 
\end{subequations}
where $H$ denotes the Hilbert transform, defined as
\begin{equation}\label{Hilbert}
H \varpi( \alpha) = \frac{1}{\pi} P.V. \int \frac{\varpi (\beta)}{ \alpha - \beta } d \beta  \,.
\end{equation}

\def\ao{\langle \varpi \rangle }
\def\aoo{\langle \varpi_0 \rangle }
\subsection{The nonlinear regime and the model equation}  Having found the linear dynamics, we turn our attention to the the nonlinear regime.
Our objective is to derive a  model equation which contains  the quadratic   nonlinearities.  To do so, we shall introduce some further notation.

We define the operator $ \Lambda $ by 
$$
\Lambda \varpi = H \p_ \alpha \varpi \,,
$$
and the space-integrated vorticity as 
$$
\ao(t) = \int \varpi( \beta , t) d \beta \,.
$$
We shall use the notation $ \langle f \rangle (t)$ to denote $ \int f( \beta , t) d \beta $ for any integrable function $f( \beta  ,t)$.

We make use of the following power series expansions for $ | \zeta | < 1$:
$$
\frac{1}{1 + \zeta ^2} = 1 - \zeta ^2 +\zeta ^4 - \zeta ^6 + \cdot\cdot\cdot  \,,
$$
and
$$
\frac{\zeta }{1 + \zeta ^2} = \zeta  - \zeta ^3 +\zeta ^5 - \zeta ^7 + \cdot\cdot\cdot  \,,
$$

Using these identities together with the approximation
$$
\frac{h(\alpha)-h(\beta)}{\alpha-\beta}\approx \partial_\alpha h(\alpha)+\frac{1}{2}\partial_\alpha^2 h(\alpha)(\beta-\alpha), 
$$
the nonlocal terms in equations (\ref{ss3}) and (\ref{ss4}) can be approximated as
$$
c(\alpha,t)\p_ \alpha h(\alpha,t)\approx \frac{\p_ \alpha h(\alpha,t)}{2\pi}P.V. \int \frac{\varpi( \beta)}{\alpha-\beta} \frac{ h( \alpha , t) - h( \beta , t) } { \alpha - \beta} d \beta\,,
$$
\begin{align*}
\BR &\approx \frac{1}{2\pi} P.V. \int \frac{ \varpi( \beta ,t)}{ \alpha - \beta } \left( - \frac{h( \alpha ,t) - h( \beta , t)}{ \alpha - \beta }  \,,
1-\left(\frac{h( \alpha ,t) - h( \beta ,t)}{\alpha - \beta } \right)^2  \right) \,,
\end{align*}
\begin{align*}
\p_ \alpha  \BR \cdot (1, \p_ \alpha h) c(\alpha ,t)&\approx 
-\frac{1}{2}\p_\alpha\left( \frac{1}{2\pi}P.V. \int \frac{\varpi( \beta)}{\alpha-\beta} \frac{ h( \alpha , t) - h( \beta , t) } { \alpha - \beta} d \beta\right)^2\\
&\quad+\frac{1}{2}\Lambda \varpi \partial_\alpha h \frac{1}{2\pi}P.V. \int \frac{\varpi( \beta)}{\alpha-\beta} \frac{ h( \alpha , t) - h( \beta , t) } { \alpha - \beta} d \beta\\
&\approx 
-\frac{1}{2}\p_\alpha\left( \frac{\partial_\alpha h}{2}H\varpi -\frac{\partial_\alpha^2 h}{4\pi}\ao\right)^2+\frac{1}{2}\Lambda \varpi \partial_\alpha h\left( \frac{\partial_\alpha h}{2}H\varpi -\frac{\partial_\alpha^2 h}{4\pi}\ao\right),
\end{align*}
and
\begin{align*}
\p_t  \BR \cdot (1, \p_ \alpha h)&\approx 
\frac{1}{2\pi}P.V. \int \frac{\varpi_t( \beta)}{\alpha-\beta} \left(-\frac{ h( \alpha , t) - h( \beta , t) } { \alpha - \beta} +\p_\alpha h(\alpha)\right)d \beta\\
&\quad-\frac{1}{2}H\varpi\partial_t\partial_\alpha h+\frac{1}{4\pi}\ao\partial_t\partial_\alpha^2 h\\
&\approx 
\frac{1}{4\pi}\partial_\alpha^2 h\frac{d}{dt}\ao-\frac{1}{2}H\varpi\partial_t\partial_\alpha h+\frac{1}{4\pi}\ao\partial_t\partial_\alpha^2 h.
\end{align*}

If we neglect terms of order $O(h^2)$ in equations \eqref{ss3} and \eqref{ss4}, we find that
\begin{subequations}\label{nonlinear1}
\begin{align} 
h_t( \alpha ,t) & =  {\frac{1}{2}} H \varpi \,, \\
\varpi_t( \alpha ,t) & = 2A g \p_ \alpha h  +  \frac{2 \sigma }{ \rho^++\rho^-} \p_ \alpha ^3 h  \nonumber \\
& \qquad 
 +{\frac{A}{2}} \varpi \p_ \alpha \varpi
+\p_ \alpha\left( \frac{\varpi }{2}   \p_ \alpha h \, H \varpi \right)  - \p_ \alpha\left( {\frac{ \varpi}{4\pi}} \p_ \alpha ^2 h\,\right)  \ao \nonumber \\
& \qquad
+ {\frac{A}{4\pi}}  \p_ \alpha \Lambda \varpi \ao
-  \frac{A}{2} H \varpi \Lambda \varpi +\frac{A}{2\pi}\partial_\alpha^2 h(\alpha)\frac{d}{dt}\ao\,.
\end{align} 
\end{subequations}

Integrating in space, we obtain that the vorticity average $\ao$ verifies
$$
\frac{d}{dt}\ao=0;
$$
thus, 
$$
\ao(t)=\ao(0)=\aoo.
$$
Finally, we can use the Tricomi relation for the Hilbert transform,
\begin{equation}\label{tricomi}
2H(fHf)=(Hf)^2-f^2 \,,
\end{equation} 
to obtain
$$
\frac{1}{2}\partial_\alpha((H\varpi)^2-\varpi^2)=H \varpi \Lambda \varpi-\varpi \p_ \alpha \varpi=\partial_\alpha H(\varpi H\varpi).
$$
Then, the system (\ref{nonlinear1}a,b) is equivalent to
\begin{subequations}\label{nonlinear3}
\begin{align} 
h_t( \alpha ,t) & =  {\frac{1}{2}} H \varpi \,, \\
\varpi_t( \alpha ,t) & = 2A g \p_ \alpha h  +  \frac{2 \sigma }{ \rho^++\rho^-} \p_ \alpha ^3 h 
+ \p_ \alpha\left(\frac{\varpi }{2}   \p_ \alpha h \, H \varpi\right)\nonumber\\
&\qquad - \p_ \alpha\left( {\frac{ \varpi}{4\pi}} \p_ \alpha ^2 h\,\right)  \aoo+ {\frac{A}{4\pi}}  \p_ \alpha \Lambda \varpi \aoo
-{\frac{A}{2}} \Lambda(\varpi H \varpi) \,.
\end{align} 
\end{subequations}

The coupled first-order system \eqref{nonlinear3} can be written as one second-order equation for the evolution of the height function:
\begin{align} 
h_{tt}( \alpha ,t)  &= A g \Lambda h  -  \frac{ \sigma }{ \rho^++\rho^-} \Lambda^3 h
- \Lambda\left(H h_t   \p_\alpha h \, h_t\right) \nonumber\\
&\qquad - A \p_\alpha(H h_t  h_t)+ \Lambda\left( {\frac{H h_t}{4\pi}} \p_ \alpha ^2 h\,\right)  \aoo+ {\frac{A}{4\pi}}  \p_ \alpha \Lambda h_t \aoo \,.\label{wavenonlinear1}
\end{align}
The model equation \eqref{wavenonlinear1} contains both quadratic and cubic nonlinearities, but we can simply further.

Keeping only the quadratic nonlinearities, we find that the
graph of the interface $(x, h(x,t))$ evolves according to
\begin{align} 
h_{tt}( \alpha ,t)  &= A g \Lambda h  -  \frac{ \sigma }{ \rho^++\rho^-} \Lambda^3 h
 \nonumber\\
&\qquad - A \p_\alpha(Hh_t h_t)+ \Lambda\left( {\frac{H h_t}{4\pi}} \p_ \alpha ^2 h\,\right)  \aoo+ {\frac{A}{4\pi}}  \p_ \alpha \Lambda h_t \aoo \,.\label{wavenonlinear0}
\end{align} 

By assuming that our initial vorticity has zero average, we arrive at the nonlinear equation
\begin{equation} 
h_{tt}( \alpha ,t)  = A g \Lambda h  -  \frac{ \sigma }{ \rho^++\rho^-} \Lambda^3 h
 - A \p_\alpha(H h_t h_t) \,.\label{wavenonlinear}
\end{equation} 
Equation   \eqref{wavenonlinear} is supplemented with initial conditions.  In particular, we specify the initial interface position
and velocity, respectively, by
$$
h( \cdot , 0) = h_0 \ \text{ and } h_t(\cdot , t) = h_1 \,.
$$
As we explain in the next section, this model equation has a natural stability condition, requiring the product of the Atwood number $A$ and
and the initial velocity field $h_1$ to be positive. 

The second-order-in-time nonlinear wave equation   \eqref{wavenonlinear} is a new model for the motion of an interface under the influence of
RT instability.   We shall refer to either the system  \eqref{nonlinear3} or the wave equation  \eqref{wavenonlinear} as the $h$-model.


\section{Well-posedness of the $h$-model equation \eqref{wavenonlinear}}\label{sec:wp}
We now prove that our  $h$-model equation \eqref{wavenonlinear} is well-posed in Sobolev spaces, when a certain stability condition is 
satisfied by the data.

The space $L^2( \mathbb{T}  )$ consists of the Lebesgue measurable functions on the circle $ \mathbb{T}  $ which are square-integrable with norm
$\| h\|_0^2 = \int_ \mathbb{T}  |h|^2 dx$.  For $s \ge 0  $, 
we define the homogeneous Sobolev space 
$$
\dot{H}^s( \mathbb{T} ) = \left\{ h \in L^2( \mathbb{T}  ) \ : \ \sum_{k \in \mathbb{Z}  }   |k|^{2s} |\hat h(k)|^2  < \infty \right\} \,,
$$
with norm defined as 
\begin{equation}\label{hsnorm}
\|h\|_s^2 = \int_ \mathbb{T}  |\Lambda ^s h|^2  dx \,.
\end{equation} 
For $s \ge 0$, functions $ \dot{H}^s( \mathbb{T} ) $ are identified with $2\pi$-periodic functions  $[-\pi,\pi]$ with finite norm $\| \cdot \|_s$.

From  equation   \eqref{wavenonlinear}, 
$$
\langle h_t (t)\rangle=\langle h_1\rangle.
$$ 
Thus,
$$
\frac{d}{dt}\langle h(t)\rangle=\int_{\bbT} h_t(t)dx=\langle h_1\rangle,
$$
so that
$$
\langle h (t)\rangle=\langle h_0\rangle+\langle h_1\rangle t.
$$ 
These identities, together with the Poincar\'e inequality,  show that \eqref{hsnorm} is an equivalent $H^s( \mathbb{T})  $-norm.  (Recall
that  we are using the notation $ \langle f \rangle (t)$ to denote $ \int f( \beta , t) d \beta $ for any integrable function $f( \beta  ,t)$.)

\begin{theorem}[Local well-posedness for the $h$-model  \eqref{wavenonlinear}]\label{thm1}
Let $\sigma\geq0$, $\rho^+,\rho^->0$, $g\neq0$ be fixed constants and let $(h_0, h_1)$  denote initial position and velocity pair for 
equation \eqref{wavenonlinear}.  Suppose that
$(h_0, h_1)\in H^{2.5+\text{sgn}(\sigma)}(\mathbb{T})\times H^{2}(\mathbb{T})$ and  let
\begin{equation}\label{lambda2}
\lambda:= \min_{\alpha\in \bbT} A\,  h_1(\alpha)
\end{equation}
If
\begin{equation}\label{stability2}
\lambda>0 \,,
\end{equation}
then there exists a time $0<T^*(h_0, h_1)\leq \infty$ and a unique classical solution of \eqref{wavenonlinear} satisfying
$$
h\in C^0([0,T^*];H^{2.5+\text{sgn}(\sigma)}),\;h_t\in C^0([0,T^*];H^2)\cap L^2(0,T^*;H^{2.5}).
$$
\end{theorem}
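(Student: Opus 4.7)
The key structural observation is that the nonlinear term in \eqref{wavenonlinear} can be expanded as
\begin{equation*}
-A\p_\alpha\bigl(Hh_t\,h_t\bigr) \;=\; -A\,h_t\,\Lambda h_t \;-\; A\,Hh_t\,\p_\alpha h_t,
\end{equation*}
so its linearization in the velocity is the operator $u\mapsto -Ah_t\,\Lambda u$. Under the hypothesis $Ah_1\ge\lambda>0$, this is a genuine half-order dissipation: for any smooth $f>0$ and any $u$,
\begin{equation*}
\int_{\bbT} f\,(\Lambda u)\,u\,d\alpha \;=\; \int_{\bbT} f\,|\Lambda^{1/2}u|^2\,d\alpha + \int_{\bbT}\bigl[\Lambda^{1/2},f\bigr]u\cdot\Lambda^{1/2}u\,d\alpha.
\end{equation*}
This is the $h$-model analogue of the Rayleigh--Taylor sign condition, and it is what permits local well-posedness in the unstable regime $Ag>0$ despite the ``wrong'' sign of the linear term $Ag\Lambda h$.

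I would construct solutions by parabolic regularization, adding $-\varepsilon\,\Lambda^{4} h_t^\varepsilon$ to the right-hand side (or, equivalently, using a Galerkin truncation), so that standard ODE theory in a Sobolev space produces short-time smooth approximate solutions $(h^\varepsilon,h^\varepsilon_t)$. The essential task is to obtain energy bounds that are uniform in $\varepsilon$ on a time interval $[0,T^*]$ independent of $\varepsilon$. Apply $\Lambda^2$ to the equation, test against $\Lambda^2 h_t^\varepsilon$, and integrate by parts; the linear terms reconstruct the derivative of
\begin{equation*}
\mathcal E_0(t) \;=\; \tfrac12\|h_t\|_{H^2}^2 \;-\; \tfrac{Ag}{2}\|h\|_{H^{2.5}}^2 \;+\; \tfrac{\sigma}{2(\rho^++\rho^-)}\|h\|_{H^{3.5}}^2.
\end{equation*}
The dominant contribution of the nonlinearity is
\begin{equation*}
-A\int_{\bbT} h_t\,|\Lambda^{5/2} h_t|^2\,d\alpha \;\le\; -\tfrac{\lambda}{2}\,\|h_t\|_{H^{2.5}}^2,
\end{equation*}
obtained after commuting $\Lambda^2$ with the ``coefficient'' $h_t$ via Kato--Ponce; all remainders are bounded by $C(\|h_t\|_{H^2})\|h_t\|_{H^{2.5}}$ and absorbed by Young's inequality. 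When $Ag<0$ the functional $\mathcal E_0$ already controls $\|h_t\|_{H^2}^2+\|h\|_{H^{2.5+\sgn\sigma}}^2$; when $Ag>0$ I would pass to a modified energy $\mathcal E=\mathcal E_0+K\|h\|_{H^{2.5+\sgn\sigma}}^2$ with $K$ large, using either interpolation between $H^{2.5}$ and $H^{3.5}$ (for $\sigma>0$) or Poincar\'e together with the mean-value identity $\langle h\rangle(t)=\langle h_0\rangle+\langle h_1\rangle t$ (for $\sigma=0$) to restore coercivity. This produces the closed inequality
\begin{equation*}
\frac{d}{dt}\mathcal E(t) + \tfrac{\lambda}{4}\|h_t\|_{H^{2.5}}^2 \;\le\; P\bigl(\mathcal E(t)\bigr)
\end{equation*}
for a polynomial $P$, which both bounds $\mathcal E$ by Gronwall and yields the parabolic gain $h_t\in L^2(0,T^*;H^{2.5})$.

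The main obstacle is the self-referential nature of the sign hypothesis: the estimate requires $Ah_t(\alpha,t)\ge\lambda/2$ pointwise, but this pointwise positivity of the velocity is exactly what must be propagated by the evolution. I would handle it by a continuation argument. Using $H^2(\bbT)\hookrightarrow C(\bbT)$ and the bound $\|h_{tt}(t)\|_{L^\infty}\le C(\mathcal E(t))$ read off from the equation, $h_t$ is Lipschitz in time with values in $C(\bbT)$; choosing $T^*$ small enough that $|A|\,\|h_t(t)-h_1\|_{L^\infty}\le\lambda/2$ on $[0,T^*]$ preserves the sign hypothesis and closes the bootstrap. Passage to the limit $\varepsilon\to 0$ then follows from Aubin--Lions compactness and weak-lower-semicontinuity, delivering the stated regularity. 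Uniqueness is obtained by applying the same energy method to the difference $(\delta h,\delta h_t)$ of two solutions at one derivative lower: the half-order dissipation from $-A\,h_t^{(1)}\Lambda\delta h_t$ again dominates every quadratic interaction, now controlled by the $L^\infty$ bounds on the individual velocities, and a Gronwall argument forces $\delta h\equiv 0$.
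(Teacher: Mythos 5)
Your proposal is correct and follows essentially the same strategy as the paper: regularize (the paper uses the Fourier truncation $P_\epsilon$ rather than adding $-\varepsilon\Lambda^4 h_t$, but this is immaterial), extract the half-order dissipation $-\tfrac{\lambda}{2}\|h_t\|_{2.5}^2$ from the term $-Ah_t\Lambda h_t$ under the bootstrap assumption $Ah_t\ge\lambda/2$, propagate that pointwise lower bound by time-continuity of $h_t$ in $C(\bbT)$, and pass to the limit by compactness; the only implementation difference is that the paper obtains the dissipative term by symmetrizing the periodic kernel representation of $\Lambda$ rather than by Kato--Ponce commutators, and it bounds the gravity term crudely by $|g|\|h\|_{2.5}\|h_t\|_{2.5}$ plus the fundamental theorem of calculus instead of building $-\tfrac{Ag}{2}\|h\|_{2.5}^2$ into a modified energy. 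One small step of yours needs repair when $\sigma>0$: the claim $\|h_{tt}\|_{L^\infty}\le C(\mathcal E)$ fails there, since the energy only gives $h\in H^{3.5}$ and hence $\Lambda^3 h\in H^{0.5}\not\subset L^\infty$; the paper instead bounds $\|h_t(t)-h_1\|_{L^2}\le Ct$ from $h_{tt}\in L^2$ and interpolates with the uniform $H^2$ bound to get the $L^\infty$ smallness $Ct^{1/8}$ needed to close the bootstrap.
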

\begin{proof}
{\it Step 1. Approximate problem for $h^ \epsilon $, $ \epsilon >0$.}  For $ \epsilon >0$, we introduce a sequence of approximations
to equation  \eqref{wavenonlinear}. 
We let $P_\epsilon $ denote the projection operator in $L^2( \mathbb{T})$, given by
$$P_\epsilon f( \alpha ) =   \sum _{|k| \le 1/ \epsilon } \hat f (k) e^{ik \alpha } \,,
$$
where $\hat f(k)$ denotes the $k$th Fourier mode of $f$.
  Then, we let $h^\epsilon $ be a solution to
\begin{equation}\label{approx}
h^ \epsilon _{tt}( \alpha ,t)  = A g P_\epsilon   \Lambda  P_\epsilon h^ \epsilon   -  \frac{ \sigma }{ \rho^++\rho^-} P_\epsilon \Lambda^3  P_ \epsilon h^ \epsilon 
 - A P_\epsilon  \p_\alpha(P_\epsilon h^ \epsilon _t \,  P_\epsilon H h^ \epsilon _t)  \,,
\end{equation} 
with initial data given by $(P_\epsilon h_0, P_\epsilon h_1)$.   The projection operator $P_ \epsilon $ commutes with $\p_ \alpha $ and with $H$ (and hence
with $ \Lambda$).

 We let the parameter $ \epsilon $ range in the interval $ (0, \epsilon _0]$ for a 
constant $  \epsilon _0\ll 1$ to be chosen later.  Then,  ODE theory provides a unique short-time solution 
$h^\epsilon $
to equation \eqref{approx} on a time interval $[0, T_ \epsilon ]$; the solution $h_ \epsilon $ is smooth and can be taken in $ C^ 2([0,T_ \epsilon ]; H^s( \mathbb{T}  ))$ for all $s \ge 0$.

According to the equation \eqref{approx}, $h_{tt}^\epsilon = P_ \epsilon h_{tt}^ \epsilon $; since  $\left(h^ \epsilon (0), h^\epsilon _t(0)\right) = (P_ \epsilon h_0,
P_\epsilon h_1)$, the fundamental theorem of calculus shows that
$$
h^\epsilon = P_ \epsilon h^ \epsilon  \  \text{ and } \  h_t^\epsilon = P_ \epsilon h_t^ \epsilon  \,.
$$
As such, \eqref{approx} can be written as
\begin{equation} 
h^ \epsilon _{tt}( \alpha ,t)  = A g   \Lambda P_ \epsilon ^2  h^ \epsilon   -  \frac{ \sigma }{ \rho^++\rho^-}  \Lambda^3  P_ \epsilon ^2 h^ \epsilon 
 - A P_\epsilon  \p_\alpha( h^ \epsilon _t \,  H h^ \epsilon _t)  \,. \tag{\ref{approx}'}
\end{equation} 

\vspace{.1 in}
\noindent
{\it Step 2. The higher-order energy norm.}  We define the higher-order energy norm $E(t)$ by
\begin{equation}\label{eq:energy}
E(t)=\sup_{0\leq s\leq t}\left\{\frac{\sigma}{\rho^++\rho^-}\|h^ \epsilon (s)\|_{3.5}^2+\|h^ \epsilon _t(s)\|_{2}^2 
+ \frac{\lambda}{2} \int_0^t \|h^\epsilon _t(s)\|_{2.5}^2ds\right\}.
\end{equation}
Note that for each $ \epsilon \in (0, \epsilon _0]$, $t \mapsto E(t)$ is continuous on $[0,T^ \epsilon]$.

\vspace{.1 in}
\noindent
{\it Step 3. Lower bound for $ h^ \epsilon _t$.}  We shall assume that by choosing $T_ \epsilon $  and $\epsilon _0$ sufficiently small, 
\begin{equation}\label{lowerbound}
 A\,h^ \epsilon _t ( \alpha , t) \ge \frac{\lambda}{2} \ \forall \  \alpha \in \mathbb{T}  , t \in [0, T_ \epsilon ], \epsilon \in [0, \epsilon _0] \,.
\end{equation} 
Below, we shall verify that this assumption holds on a time interval $[0,T]$, where $T$ is independent of $ \epsilon$.

\vspace{.1 in}
\noindent
{\it Step 4. $ \epsilon $-independent energy estimates.}   We now establish a time of existence  and bounds  for  $h^ \epsilon $ which are independent
of $ \epsilon$.

We multiply equation  \eqref{wavenonlinear} by $\Lambda^4 h^ \epsilon _t$,  integrate over $ \mathbb{T}  $, and find that
\begin{equation}\label{eq:est}
\frac{1}{2}\left(\frac{d}{dt}\|h^ \epsilon _t\|_{2}^2+\frac{\sigma}{\rho^++\rho^-}\frac{d}{dt}\|h^\epsilon \|_{3.5}^2\right)\leq |g|\|h^\epsilon \|_{2.5} 
\|h^\epsilon _t\|_{2.5}+ I,
\end{equation}
where 
\begin{equation}\label{termI}
I=-A\int_{\bbT}\p_\alpha(h^\epsilon _tHh^ \epsilon _t) \, \p_\alpha^4h^\epsilon _t \, d\alpha.
\end{equation}
We first write the integral $I$ as
\begin{align*}
I&=-A\int_{\bbT}\p_\alpha^3(h^\epsilon_t \, H h^\epsilon_t) \p_\alpha^2 h^\epsilon_td\alpha\nonumber\\
& = \underbrace{-A\int_{\bbT} \Lambda \p_ \alpha ^2 h^\epsilon_t \,  h^\epsilon_t \,  \p_\alpha^2 h^\epsilon_td\alpha}_{I_1}
-
 \underbrace{A\int_{\bbT}  H h^\epsilon_t \,  \p_ \alpha ^3 h^\epsilon_t \,  \p_\alpha^2 h^\epsilon_td\alpha}_{I_2}
 -
 \underbrace{3A\int_{\bbT}  \left[ \Lambda   \p_ \alpha  h^\epsilon_t  \p_ \alpha h^ \epsilon _t
 + \Lambda    h^\epsilon_t  \p_\alpha ^2 \right]\,  \p_\alpha^2 h^\epsilon_td\alpha}_{I_3} \,.
\end{align*} 
The integral $I_2$ has an exact derivative, so upon integration-by-parts, 
$$
I_2 = {\frac{A}{2}} \int_{\bbT}  \Lambda  h^\epsilon_t \,   \p_ \alpha ^2 h^\epsilon_t \,  \p_\alpha^2 h^\epsilon_t \, d\alpha \le 
\| h^\epsilon_t\|_2^2\| h_t^\epsilon \|_{L^ \infty } \le C\| h^\epsilon_t\|_2^2\| h_t^ \epsilon \|_{1.75} \,,
$$
the last inequality following from the Sobolev embedding theorem.   The integral $I_3$ clearly has the same bound.

With the assumption \eqref{lowerbound}, 
the integral $I_1$ provides extra regularity for $h_t^ \epsilon $ as follows:
\begin{align*} 
I_1
&\leq-\int_{\bbT}\left(\frac{1}{4\pi}\int_{\bbT}\frac{\p_\alpha^2  h^\epsilon_t(\alpha)-\p_\alpha^2  h^\epsilon_t(\beta)}{\sin^2\left(\frac{\alpha-\beta}{2}\right)}d\beta\right) A  h^\epsilon_t(\alpha)  \p_\alpha^2  h^\epsilon_t(\alpha)d\alpha   \nonumber\\
&\leq-\frac{A}{8\pi}\int_{\bbT}\int_{\bbT}\frac{\p_\alpha^2  h^\epsilon_t(\alpha)-\p_\alpha^2  h^\epsilon_t(\beta)}{\sin^2\left(\frac{\alpha-\beta}{2}\right)} \left(h^\epsilon_t(\alpha)  \p_\alpha^2  h^\epsilon_t(\alpha)-  h^\epsilon_t(\beta)  \p_\alpha^2  h^\epsilon_t(\beta)\right)d\alpha d\beta \nonumber\\
&\leq-\frac{A}{8\pi}\int_{\bbT}\int_{\bbT}\frac{\left(\p_\alpha^2  h^\epsilon_t(\alpha)-\p_\alpha^2   h^\epsilon_t(\beta)\right)\left( h^\epsilon_t(\alpha)- h^\epsilon_t(\beta)\right)}{\sin^2\left(\frac{\alpha-\beta}{2}\right)} \p_\alpha^2  h^\epsilon_t(\alpha)d\alpha d\beta\nonumber\\
&\quad \qquad \qquad -\frac{A}{8\pi}\int_{\bbT}\int_{\bbT}\frac{\left(\p_\alpha^2   h^\epsilon_t(\alpha)-\p_\alpha^2  h^\epsilon_t(\beta)\right)^2}{\sin^2\left(\frac{\alpha-\beta}{2}\right)} 
 h^\epsilon_t(\beta) d\alpha d\beta  \\
 & = -\frac{A}{8\pi}\int_{\bbT}\int_{\bbT}\frac{\left(\p_\alpha^2   h^\epsilon_t(\alpha)
 -\p_\alpha^2  h^\epsilon_t(\beta)\right)^2}{\sin^2\left(\frac{\alpha-\beta}{2}\right)}  
 h^\epsilon_t(\beta) d\alpha d\beta   \,.
\end{align*}
The last equality follows from the fact that
$$
\int_{\bbT}\int_{\bbT}\frac{\p_\alpha^2  h^ \epsilon _t(\beta)\p_\alpha^2 h^\epsilon _t(\alpha)\left( h^\epsilon _t(\alpha)- h^\epsilon _t(\beta)\right)}{\sin^2\left(\frac{\alpha-\beta}{2}\right)} d\alpha d\beta=0\,.
$$
With our assumed lower bound for $h_t^ \epsilon $ in (\ref{lowerbound}), we find that 
$$
-I_1\geq \frac{\lambda}{2} \frac{1}{8\pi}\int_{\bbT}\int_{\bbT}\frac{\left(\p_\alpha^2   h^\epsilon_t(\alpha)
 -\p_\alpha^2  h^\epsilon_t(\beta)\right)^2}{\sin^2\left(\frac{\alpha-\beta}{2}\right)}  d\alpha d\beta \,.
 $$
Using the computation
\begin{align*} 
\int_\bbT f\Lambda fd\alpha&=\int_{\bbT}\left(\frac{1}{4\pi}\int_{\bbT}\frac{f(\alpha)-f(\beta)}{\sin^2\left(\frac{\alpha-\beta}{2}\right)}d\beta\right) f(\alpha) d\alpha \\ 
&=-\int_{\bbT}\frac{1}{4\pi}\int_{\bbT}\frac{f(\alpha)-f(\beta)}{\sin^2\left(\frac{\alpha-\beta}{2}\right)} f(\beta) d\beta d\alpha \\
&=\frac{1}{8\pi}\int_{\bbT}\int_{\bbT}\frac{(f(\alpha)-f(\beta))^2}{\sin^2\left(\frac{\alpha-\beta}{2}\right)}  d\beta d\alpha \,,
\end{align*}
it follows that
$I_1 \leq-\frac{\lambda}{2}\| h^\epsilon_t\|_{2.5}^2$, and hence
$$
I \leq-\frac{\lambda}{2}\| h^\epsilon_t\|_{2.5}^2 + C\| h^\epsilon_t\|_2^2\| h^ \epsilon_t \|_{1.75}\,. 
$$
Thus,
\begin{equation}\label{eq:est2}
\frac{1}{2}\left(\frac{d}{dt}\|h^ \epsilon _t\|_{2}^2+\frac{\sigma}{\rho^++\rho^-}\frac{d}{dt}\|h^\epsilon \|_{3.5}^2\right)\leq |g|\|h^\epsilon \|_{2.5}\|h^ \epsilon _t\|_{2.5}-\frac{\lambda}{2}\|h^ \epsilon _t\|_{2.5}^2+C\|h^ \epsilon_ t\|_2^2\|h^ \epsilon_t \|_{1.75}.
\end{equation}
The fundamental theorem of calculus shows that
$$
\|h^ \epsilon \|^2_{2.5}\leq \|h_0\|^2_{2.5}+ t\int_0^t\|h^ \epsilon _t(s)\|_{2.5}^2ds \,,
$$
and hence
\begin{align*}
\frac{d}{dt}\|h^\epsilon _t \|_{2}^2+\frac{\sigma}{\rho^++\rho^-}\frac{d}{dt}\|h^ \epsilon \|_{3.5}^2+\frac{\lambda}{2}\|h^ \epsilon _t\|_{2.5}^2
&\leq C\|h^ \epsilon \|_{2.5}^2+C\|h^ \epsilon _t\|_2^4\\
&\leq C\|h_0\|_{2.5}^2+Ct\int_0^t\|h^ \epsilon _t\|_{2.5}^2ds+C\|h^ \epsilon _t\|_2^4 \,.
\end{align*}
It follows that
\begin{equation}\label{gronwall}
\frac{d}{dt} E(t)  \le C \| h^ \epsilon _t\|^2_2 E(t) + C \|h_0\|_{2.5} \le CE(t)^2 + C \|h_0\|_{2.5} \,.
\end{equation} 
By Gronwall's inequality,
\begin{equation}\label{Ebound}
E(t) \le C\left[ \| h_0 \|_{2.5}^2 + \|h_1\|_2^2\right] \,.
\end{equation} 

\vspace{.1 in}
\noindent
{\it Step 5. Verifying the lower-bound on $h_t^ \epsilon $.} 
As the bound (\ref{Ebound}) for $E(t)$ is independent of $ \epsilon $ and $ \sigma $, there exists a time interval $[0,T]$, with $T$ independent of 
$ \epsilon $ and $ \sigma $, such that 
\begin{equation}\label{Ebound2}
E(t) \le C\left[ \| h_0 \|_{2.5}^2 + \|h_1\|_2^2\right]  \ \text{ on } [0,T]  \text{ with } T \text{ independent  of } \epsilon , \sigma \,.
\end{equation} 
 Using the equation (\ref{approx}'), we see that $h^ \epsilon _{tt}$ is bounded in $L^2(0,T; L^2( \mathbb{T}  ))$.  Thus,
$$
\left\|h_t(T)- P_ \epsilon h_1\right\|_{L^2}\leq \int_0^{T} \| h_{tt}(s)\|_{L^2}ds\leq C\, T
$$
Since $\left\|h_t(T)- P_ \epsilon h_1\right\|_{2} \le C$, we can interpolate between the last two inequalities and use the Sobolev embedding theorem to 
conclude that
$$
\left\|h_t(T^2)- P_ \epsilon h_1\right\|_{L^\infty}\leq  C T^{1/8} \,.
$$
By choosing $T$ and $ \epsilon_0$ sufficiently small, and using \eqref{lambda2}, we verify \eqref{lowerbound}.

\vspace{.1 in}
\noindent
{\it Step 6. Existence of solutions.}  
From \eqref{Ebound2}, 
for all $1 < p < \infty $,
\begin{align*} 
&h^ \epsilon \rightharpoonup h \ \text{ in }   W^{1,p}(0,T; H^{2}( \mathbb{T}  )) \,, \\
& h^ \epsilon_t \rightharpoonup h \ \text{ in } L^2(0,T; H^{2.5}( \mathbb{T}  )) \cap  H^1(0,T; L^{2}( \mathbb{T}  )) \cap L^p(0,T; H^2( \mathbb{T}  ))  \,.
\end{align*} 
By using Rellich's theorem, we  can pass to the limit as $ \epsilon \to 0$ in (\ref{approx}').  Since $\| f\|_{L^ \infty } = \lim_{p \to \infty } \|f\|_{L^p}$,
we find  that the limit $h$ is a solution of  \eqref{wavenonlinear} on $[0,T]$ and 
$$
h \in L^ \infty (0,T; H^{2.5}( \mathbb{T}  )) \,, \ h_t \in L^ 2 (0,T; H^{2.5}( \mathbb{T}  )) \cap L^ \infty (0,T; H^2( \mathbb{R}  )) \,.
$$
It is easy to prove that $t \mapsto h( \cdot , t)$ and $t \mapsto h_t( \cdot , t)$ are continuous into $H^{2.5}( \mathbb{T}  )$ and 
$H^{2}( \mathbb{T}  )$, respectively, with respect to the weak topologies on $H^{2.5}( \mathbb{T}  )$ and $H^{2}( \mathbb{T}  )$; furthermore, we can
again find a differential inequality that is almost identical to (\ref{gronwall}) (but this time for the solution $h$ rather than the sequence $h^ \epsilon $). It
follows  that $t \mapsto \|h(t)\|_{2.5}$ and $t \mapsto \|h_t(t)\|_{2.5}$ are continuous, hence 
$$
h \in C^0 (0,T; H^{2.5}( \mathbb{T}  )) \,, \ h_t \in L^ 2 (0,T; H^{2.5}( \mathbb{T}  )) \cap C^0(0,T; H^2( \mathbb{R}  )) \,.
$$
When $ \sigma >0$, by the same argument, we have the better regularity
$$
h \in C^0 (0,T; H^{3.5}( \mathbb{T}  )) \,.
$$


\vspace{.1 in}
\noindent
{\it Step 7. Uniqueness of solutions.}  
 Uniqueness of solutions follows from a standard $L^2$-type energy estimate, and we omit the details.
\end{proof}

\begin{remark} If the stability condition $A\, h_1 >0$ is not satisfied, the evolution equation (\ref{wavenonlinear}) may not be well-posed in Sobolev
spaces.  For analytic initial data, however, the equation does not require a stability condition for a short-time existence theorem.  We shall investigate this
further in future work.
\end{remark}

Having established existence and uniqueness of classical solutions to the RT model \eqref{wavenonlinear}, we next establish a number of interesting
energy laws satisfied by its solutions.

\begin{proposition}[Energy laws for solutions to the $h$-model  \eqref{wavenonlinear}]\label{prop2}
Given constants $\sigma\geq0,\rho^+, \rho^->0$, and $g \neq 0$, suppose that  $h$ is a solution to  \eqref{wavenonlinear} with initial data
 $(h_0,h_1)$. Then $h$ verifies the following energy laws:
 \begin{subequations}\label{energylaw}
\begin{align} 
&\|h_t\|_{0}^2+\frac{\sigma}{\rho^++\rho^-}\|h\|_{1.5}^2-Ag\|h\|_{0.5}^2+\int_0^tD_1(s) ds \nonumber \\
& \qquad \qquad \qquad \qquad \qquad \qquad 
= \|h_1\|_{0}^2+\frac{\sigma}{\rho^++\rho^-}\|h_0\|_{1.5}^2-Ag\|h_0\|_{0.5}^2, \\
&
\|h_t\|_{0.5}^2+\frac{\sigma}{\rho^++\rho^-}\|h\|_{2}^2-Ag\|h\|_{1}^2+\int_0^tD_2(s) ds \nonumber\\
& \qquad \qquad \qquad \qquad \qquad \qquad 
= \|h_1\|_{0.5}^2+\frac{\sigma}{\rho^++\rho^-}\|h_0\|_{2}^2-Ag\|h_0\|_{1}^2, \\
&
\|h_t-\langle h_1\rangle\|_{-0.5}^2+\frac{\sigma}{\rho^++\rho^-}\| h\|_{1}^2-Ag\|h\|_{0}^2+\int_0^tD_3(s) ds+2Ag\int_0^t 2\langle h_1\rangle\langle  h (s)\rangle ds \nonumber\\
& \qquad \qquad \qquad \qquad \qquad \qquad 
= \|h_1-\langle h_1\rangle\|_{-0.5}^2+\frac{\sigma}{\rho^++\rho^-}\|h_0\|_{1}^2-Ag\|h_0\|_{0}^2,
\end{align} 
\end{subequations}
where the terms $D_1$, $D_2$, and $D_3$ are given by
\begin{align*} 
D_1& =2\int_\bbT\text{P.V.}\int_\bbT \frac{A\left(h_t(\alpha)+h_t(\beta)\right)\left(h_t(\alpha)-h_t(\beta)\right)^2}{8\pi\sin^2\left(\frac{\alpha-\beta}{2}\right)}d\alpha d\beta\,, \\
D_2&=A\int_{\mathbb{T}}h_t\left(\left(\Lambda h_t\right)^{2}+\left(\p_\alpha h_t\right)^{2}\right)d\alpha\,, \\
D_3&=2A\int_{\bbT}h_t\left|H h_t\right|^2d\alpha \,.
\end{align*} 
\end{proposition}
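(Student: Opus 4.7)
The plan is to obtain the three identities by multiplying the $h$-model equation $h_{tt} = Ag\Lambda h - \frac{\sigma}{\rho^++\rho^-}\Lambda^3 h - A\partial_\alpha(Hh_t\,h_t)$ by three different test functions — namely $2h_t$ for (a), $2\Lambda h_t$ for (b), and $2\Lambda^{-1}(h_t-\langle h_1\rangle)$ for (c) — integrating over $\bbT$, and then integrating in time from $0$ to $t$. In each case the two linear terms on the right produce perfect time derivatives of the three norms appearing in the identity via the self-adjointness relation $2\int \Lambda^{2s} h\cdot h_t\,d\alpha = \frac{d}{dt}\|h\|_s^2$. For (c) the multiplier is well defined because of the conservation law $\langle h_t(t)\rangle = \langle h_1\rangle$ established just before Theorem~1, which guarantees $h_t - \langle h_1\rangle$ is mean-zero and so in the domain of $\Lambda^{-1}$.

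The crux of each identity is the treatment of the quadratic term $-A\int\partial_\alpha(Hh_t\,h_t)\cdot M\,d\alpha$, where $M$ denotes the multiplier. For (a), one integration by parts reduces it to $-A\int \Lambda h_t \cdot h_t^2\,d\alpha$; inserting the pointwise kernel representation of $\Lambda$ on the torus already used in the proof of Theorem~1, and symmetrizing under the swap $\alpha\leftrightarrow\beta$, produces precisely the kernel double integral $D_1$. For (b), the skew-adjointness of $H$ moves $H$ off of $\partial_\alpha h_t$, turning the nonlinearity into $2A\int\Lambda(Hh_t\,h_t)\partial_\alpha h_t\,d\alpha$; the Tricomi relation \eqref{tricomi} collapses $\Lambda(Hh_t\,h_t) = Hh_t\,\Lambda h_t - h_t\partial_\alpha h_t$, and a second application of \eqref{tricomi} to the residual triple product $\int Hh_t \cdot H(\partial_\alpha h_t)\cdot \partial_\alpha h_t$ reduces the whole contribution to $-A\int h_t\bigl[(\Lambda h_t)^2 + (\partial_\alpha h_t)^2\bigr]d\alpha = -D_2$. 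For (c), the key algebraic identity is $\partial_\alpha \Lambda^{-1} = -H$ on the mean-zero subspace, so a single integration by parts turns the nonlinear term into $-2A\int (Hh_t)^2 h_t\,d\alpha = -D_3$; concurrently, the gravity term contributes the extra source $-2Ag\langle h_1\rangle \langle h\rangle$ from $\int\Lambda h\cdot\Lambda^{-1}(h_t - \langle h_1\rangle)\,d\alpha = \frac{1}{2}\frac{d}{dt}\|h\|_0^2 - \langle h_1\rangle\langle h\rangle$, which upon time integration yields the distinguished $\int_0^t \langle h_1\rangle\langle h(s)\rangle\,ds$ term in identity (c).

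The main obstacle is the double application of Tricomi required for identity (b): one must carefully bookkeep signs and integration-by-parts factors in the triple Hilbert-transform products, and only after these two reductions does the dissipation emerge in its sign-definite, $h_t$-weighted form $D_2$. By comparison, (a) is essentially a one-step symmetrization of the torus kernel for $\Lambda$, and (c) is a routine duality argument whose only subtlety is the zero-mean projection generating the additional $\langle h_1\rangle\langle h\rangle$ source.
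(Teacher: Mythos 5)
Your proposal is correct and follows essentially the same route as the paper: test \eqref{wavenonlinear} against $h_t$, $\Lambda h_t$, and $\Lambda^{-1}(h_t-\langle h_1\rangle)$, read off the linear terms as exact time derivatives of the stated norms, and reduce the nonlinearity via the periodic kernel representation of $\Lambda$ with symmetrization for $D_1$, skew-adjointness of $H$ plus the Tricomi relation \eqref{tricomi} for $D_2$, and the identity $\partial_\alpha\Lambda^{-1}=-H$ on mean-zero functions for $D_3$. The only (immaterial) difference is in case (b), where the paper first expands $\partial_\alpha(h_tHh_t)$ by the product rule and applies Tricomi once to the resulting triple product, whereas you move $H$ across first and apply Tricomi twice; both yield the same sign-definite $D_2$.
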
 

\begin{proof}
The proof reduces to a careful manipulation of the following integrals:
$$
I_1=-A\int_{\bbT}\p_\alpha(h_t Hh_t)h_td\alpha \,,
$$
$$
I_2=-A\int_{\bbT}\p_\alpha(h_t H h_t)\Lambda h_t d\alpha \,,
$$
$$
I_3=-A\int_{\bbT}\p_\alpha(h_t Hh_t)\Lambda^{-1}(h_t-\langle h_1\rangle)d\alpha\,,
$$
where $I_j=-D_j/2$, for $j=1,2,3$.  First,
\begin{align*}
I_1&= A \int_\bbT Hh_t h_t \p_\alpha h_t d\alpha\\
&= - \frac{A}{2} \int_\bbT \Lambda h_t h_t^2  d\alpha\\
&= - \frac{A}{8\pi} \int_\bbT\text{P.V.}\int_\bbT \frac{(h_t(\alpha)-h_t(\beta))^2(h_t(\alpha)+h_t(\beta))}{\sin^2\left(\frac{\alpha-\beta}{2}\right)} d\beta d\alpha.
\end{align*}
Next,
$$
I_2= -A\int_{\bbT}\p_\alpha h_tHh_t \p_\alpha Hh_t hd\alpha-A\int_{\bbT}h_t\left|\Lambda h_t\right|^2d\alpha.
$$
Using the skew-adjointness of the Hilbert transform and Tricomi relation \eqref{tricomi}, 
\begin{equation*} 
 -A\int_{\mathbb{T}}Hh_t\p_\alpha h_t H\p_\alpha h_t d\alpha = A\int_{\mathbb{T}}h_t H\left(\p_\alpha h_t H\p_\alpha h_t\right) d\alpha =\frac{A}{2}\int_{\mathbb{T}}h_t\left(\left(\Lambda h_t\right)^{2}-\left(\p_\alpha h_t\right)^{2}\right)d\alpha,
\end{equation*}
so that
$$
I_2= -\frac{A}{2}\int_{\mathbb{T}}h_t\left(\left(\Lambda h_t\right)^{2}+\left(\p_\alpha h_t\right)^{2}\right)d\alpha.
$$
Finally,
\begin{align*}
I_3&= -A\int_{\bbT}\Lambda^{-1}\p_\alpha(h_t Hh_t)(h_t-\langle h_1\rangle)d\alpha\\
&= A\int_{\bbT}H(h_t Hh_t)(h_t-\langle h_1\rangle)d\alpha\\
&= -A\int_{\bbT}h_t\left|Hh_t\right|^2d\alpha\\
\end{align*}
where we have used the fact that 
$\Lambda^{-1}\p_\alpha=-H$, so that $ H\p_\alpha=\Lambda$.
\end{proof}

\begin{corollary} If $\rho_+ < \rho^-$ so that the Atwood number $A<0$, and if gravity acts downward so that $g>0$, then whenever the initial velocity $h_1$
satisfies the stability condition
$h_1 <0$, then the energy law (\ref{energylaw}b) shows that 
$$
\|h_t(t)\|_{0.5}^2+\frac{\sigma}{\rho^++\rho^-}\|h(t)\|_{2}^2 + |Ag| \| h(t)\|_{1}^2 \ \text{ decays in time for all } \ t\in[0,T] \,.
$$
\end{corollary}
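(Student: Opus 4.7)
The plan is straightforward: combine the sign information guaranteed by the stability condition with the identity for $D_2$ in energy law \eqref{energylaw}(b). Under the hypotheses $A<0$, $g>0$, and $h_1<0$, the product $A\,h_1$ is strictly positive on $\bbT$, so the stability parameter $\lambda=\min_\alpha A\,h_1(\alpha)$ defined in \eqref{lambda2} is positive and Theorem~\ref{thm1} applies, producing a unique classical solution on some interval $[0,T^*]$.

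The central observation is that the sign of $h_t$ is preserved on $[0,T^*]$. Indeed, the proof of Theorem~\ref{thm1} (Step~5) established the pointwise lower bound $A\,h_t^\epsilon(\alpha,t)\ge \lambda/2$ for the approximate solutions, which passes to the limit to yield $A\,h_t(\alpha,t)\ge \lambda/2>0$ uniformly in $(\alpha,t)\in\bbT\times[0,T^*]$. Since $A<0$, this forces $h_t(\alpha,t)<0$ on the same set. Consequently, the integrand in
\[
D_2(s)=A\int_{\bbT} h_t\bigl((\Lambda h_t)^2+(\p_\alpha h_t)^2\bigr)\,d\alpha
\]
is the product of the pointwise nonnegative factor $(\Lambda h_t)^2+(\p_\alpha h_t)^2$ with $A\,h_t\ge \lambda/2>0$, so $D_2(s)\ge 0$ for every $s\in[0,T^*]$.

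Next I rearrange \eqref{energylaw}(b). Because $A<0$ and $g>0$, the coefficient $-Ag$ equals $|Ag|>0$, so the energy law becomes
\[
\|h_t(t)\|_{0.5}^2+\frac{\sigma}{\rho^++\rho^-}\|h(t)\|_{2}^2+|Ag|\,\|h(t)\|_{1}^2
=\|h_1\|_{0.5}^2+\frac{\sigma}{\rho^++\rho^-}\|h_0\|_{2}^2+|Ag|\,\|h_0\|_{1}^2-\int_0^t D_2(s)\,ds.
\]
Since $D_2\ge 0$, the right-hand side is a nonincreasing function of $t$, which is exactly the claimed decay of the indicated quantity on $[0,T^*]$.

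No serious obstacle arises: all of the analytic work is already packaged in Theorem~\ref{thm1} (existence plus propagation of the sign of $A\,h_t$) and in Proposition~\ref{prop2} (the identity for $D_2$). The only point that deserves explicit mention is that the destabilizing terms $-Ag\|h\|_1^2$ in \eqref{energylaw}(b), which would normally obstruct such a bound, turn in our favour precisely because $Ag<0$ under the assumed configuration (lighter fluid above, gravity downward), and the cubic dissipation $D_2$ has a favourable sign exactly when $A\,h_t>0$.
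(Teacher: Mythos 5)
Your argument is correct and is exactly the reasoning the paper intends (the corollary is stated without proof as an immediate consequence of Proposition~\ref{prop2}): under $A<0$, $g>0$, $h_1<0$ the coefficient $-Ag$ equals $|Ag|$, the stability bound $A\,h_t\ge\lambda/2>0$ propagated in Step~5 of Theorem~\ref{thm1} makes $D_2\ge0$, and the energy identity then gives monotone decay. No gaps.
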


\begin{remark} 
The energy law (\ref{energylaw}a) provides decay for lower-order norms when $A\, h_1 >0$, while the energy law (\ref{energylaw}c) may actually
cause a growth-in-time $t$ which behaves like $t^2$.  There may indeed  be other higher-order energy laws to the model equation (\ref{wavenonlinear}) that have 
yet  to be established.
\end{remark} 

We now  define the average value of a function $h(\alpha)$ on $[-\pi, \pi]$:
$$
\bar{h}:=\frac{\langle h\rangle}{2\pi} =\frac{1}{2\pi}\int_\bbT h(\alpha)d\alpha \,.
$$

\begin{theorem}[Global well-posedness and asymptotic behavior for the $h$-model   \eqref{wavenonlinear}]\label{thm2}
Let $\sigma\geq0$, $\rho^+,\rho^->0$, $g\neq0$ be fixed constants and let $(h_0, h_1)$ denote the initial position and velocity, respectively, for 
the $h$-model \eqref{wavenonlinear}.  Setting
$$
h_2:= h_{tt}( \cdot , 0) = \Lambda h_0+ \sigma \Lambda^3 h_0- \p_\alpha(H h_1 h_1) \,,
$$
and 
with $\lambda$ defined by \eqref{lambda2}, suppose that
$(h_0, h_1)\in H^{2.5+\text{sgn}(\sigma)}(\mathbb{T})\times H^{2}(\mathbb{T})$  is given such that
\begin{equation}\label{stability22}
\lambda>0 \,,
\end{equation}
and
\begin{equation}\label{smallness3}
\|h_2\|_{0.5}^2+\|h_1\|_1^2+\sigma \|h_1\|_{2}^2<\left(\frac{-\bar{h}_1}{5}\right)^2\,.
\end{equation}
Then there exists a unique classical solution of \eqref{wavenonlinear} satisfying
$$
h\in C^0([0,T];H^{2.5+\text{sgn}(\sigma)}),\;h_t\in C^0([0,T];H^2)\cap L^2(0,T;H^{2.5})\;\forall\,0\leq T<\infty.
$$
Furthermore, as $t \to \infty $,  the solution $h( \cdot , t)$ converges to the homogeneous solution $h^\infty=\bar{h}_0+\bar{h}_1 t$; 
specifically,
$$
\limsup_{t\rightarrow\infty}\|h(t)-h^\infty\|_{1+\text{sgn}(\sigma)}+\|h_t(t)-\bar{h}_1 \|_{1+\text{sgn}(\sigma)}= 0.
$$
\end{theorem}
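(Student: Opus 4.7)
The plan is to extract a fractionally-damped wave equation for the fluctuation around the homogeneous solution, then run a smallness-propagation bootstrap that keeps the stability hypothesis of Theorem~\ref{thm1} valid for all time. First, integrating \eqref{wavenonlinear} over $\mathbb{T}$ kills every term on the right (since $\int \Lambda(\cdot) = \int \partial_\alpha H(\cdot) = 0$ and $\int \partial_\alpha(\cdot)=0$), so $\langle h_t\rangle(t)\equiv\langle h_1\rangle$ and $\langle h\rangle(t)=\langle h_0\rangle+\langle h_1\rangle t$. Setting $u := h - \bar h_0 - \bar h_1 t$, the function $u$ has zero spatial mean for all $t$, and since $H\bar h_1 = 0$, one checks that
\begin{equation}\label{eq:u-eq}
u_{tt} + A\bar h_1\,\Lambda u_t = Ag\,\Lambda u - \frac{\sigma}{\rho^++\rho^-}\Lambda^3 u - A\,\partial_\alpha(Hu_t\cdot u_t).
\end{equation}
The pointwise bound $Ah_1\geq\lambda>0$ forces $A\bar h_1\geq\lambda$, so the term $A\bar h_1\,\Lambda u_t$ is genuine fractional dissipation of strength at least $\lambda$ at every non-zero frequency.

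Next, I test \eqref{eq:u-eq} against $\Lambda^{2s}u_t$ for $s\in\{0,\tfrac12,1\}$, rewriting the cubic term exactly as in Proposition~\ref{prop2} via the Tricomi identity \eqref{tricomi}. Each choice of $s$ yields
$$\frac{d}{dt}\Bigl[\|u_t\|_s^2 - Ag\,\|u\|_{s+1/2}^2 + \tfrac{\sigma}{\rho^++\rho^-}\|u\|_{s+3/2}^2\Bigr] + 2A\bar h_1\,\|u_t\|_{s+1/2}^2 = R_s(u_t),$$
with $R_s$ a trilinear expression in $u_t$ bounded by $C\|u_t\|_{s+1/2}^3$ by Sobolev embedding. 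Differentiating \eqref{eq:u-eq} once in $t$ and repeating the above analysis for $w:=u_t$ produces a parallel family of identities involving $\|u_{tt}\|_{s}$ and $\|u_t\|_{s+3/2}$, with dissipation $2A\bar h_1\|u_{tt}\|_{s+1/2}^2$.

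Define the composite energy $\mathcal{E}(t):=\|u_{tt}(t)\|_{0.5}^2+\|u_t(t)\|_1^2+\sigma\|u_t(t)\|_2^2$, so that the hypothesis \eqref{smallness3} reads $\mathcal{E}(0)<(\bar h_1/5)^2$ (recall $h_2=u_{tt}(0)$). Summing appropriately weighted versions of the identities above and using Poincar\'e on the zero-mean $u$ (to absorb the indefinite $-Ag\|u\|_{\cdot}^2$ into the $\sigma$-weighted coercive term when $\sigma>0$, or against the dissipation after integration in time when $\sigma=0$), I obtain a differential inequality of the schematic form $\tfrac{d}{dt}\mathcal{E}(t)+c\lambda\,\mathcal{D}(t)\leq C\sqrt{\mathcal{E}(t)}\,\mathcal{D}(t)$, with $\mathcal{D}(t)$ controlling $\mathcal{E}(t)$. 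A continuity/bootstrap argument then shows $\mathcal{E}(t)\leq\mathcal{E}(0)<(\bar h_1/5)^2$ on the maximal interval of existence. By Sobolev embedding, $\|u_t\|_{L^\infty}\leq C\|u_t\|_1\leq |\bar h_1|/5$, whence
$$A\,h_t(\alpha,t)=A\bar h_1+A\,u_t(\alpha,t)\geq A\bar h_1-\tfrac15|A\bar h_1|\geq\tfrac{4\lambda}{5}>\tfrac{\lambda}{2},$$
so the stability condition \eqref{stability2} of Theorem~\ref{thm1} is preserved uniformly; iterating the local theorem yields global existence with the claimed regularity.

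For the asymptotics, the same bootstrap furnishes $\int_0^\infty\|u_t(s)\|_{s+1/2}^2\,ds<\infty$ for $s\in\{0,\tfrac12,1\}$. Combined with the uniform bound on $\|u_{tt}\|_{0.5}$, a Barb\u alat-type argument gives $\|u_t(t)\|_{1+\text{sgn}(\sigma)}\to 0$; using \eqref{eq:u-eq} to express the highest derivative of $u$ in terms of $u_{tt}$ plus lower-order terms, together with the first-order energy law \eqref{energylaw}a applied to $u$, yields $\|u(t)\|_{1+\text{sgn}(\sigma)}\to 0$. The main technical obstacle is closing the bootstrap in the RT-unstable regime $Ag>0$: the indefinite term $-Ag\|u\|_{\cdot}^2$ in the energy has the wrong sign and must be traded against either the surface tension $\frac{\sigma}{\rho^++\rho^-}\|u\|^2_{\cdot+1}$ (via Poincar\'e on zero-mean $u$) or the fractional damping $A\bar h_1\|u_t\|^2_{\cdot+1/2}$; the smallness condition \eqref{smallness3}, calibrated precisely against $\bar h_1^2$, is what allows this trade while simultaneously absorbing the cubic remainders $R_s$.
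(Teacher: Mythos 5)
Your strategy is the one the paper itself uses: subtract the linear-in-time homogeneous state, observe that the quadratic nonlinearity linearized about the mean velocity produces the fractional damping $A\bar h_1\Lambda u_t$ with coefficient bounded below by $\lambda$, close a smallness bootstrap at the level of $u_t$ and $u_{tt}$ using \eqref{smallness3}, and propagate the stability condition through the resulting $L^\infty$ bound on $u_t$. Two steps, however, do not go through as written. First, your proposed treatment of the RT-unstable orientation $Ag>0$ cannot work: the indefinite term $-Ag\|u\|_{s+1/2}^2$ involves $u$, whereas the fractional damping controls only $u_t$, and when $\sigma=0$ the linearization of your damped equation at mode $k$, namely $\ddot a_k + A\bar h_1|k|\,\dot a_k - Ag|k|\,a_k=0$, has a positive real characteristic root for every $k\neq0$ (the product of the roots is $-Ag|k|<0$), so low frequencies grow exponentially no matter how small the data. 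No condition of the form \eqref{smallness3} can repair this. The paper sidesteps the issue by normalizing to $\rho^+=0$, $\rho^-=1$, $g=1$, i.e.\ $Ag=-1<0$, so that $-Ag\|u\|_{s+1/2}^2$ is coercive; your argument must likewise restrict to the gravity-stable orientation rather than claim to trade the bad term against the damping.

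Second, ``iterating the local theorem'' is not yet justified: the existence time in Theorem~\ref{thm1} depends on $\|h\|_{2.5+\text{sgn}(\sigma)}$ and $\|h_t\|_{2}$, while your decaying energies control only $\|u_{tt}\|_{0.5}$, $\|u_t\|_{1}$, $\sqrt{\sigma}\|u_t\|_2$ and the corresponding lower-order quantities for $u$. You still need a top-order estimate showing that $\|h_t\|_2$ and $\|h\|_{2.5+\text{sgn}(\sigma)}$ cannot blow up in finite time; the paper obtains this by testing against $\Lambda^4 f_t$ and invoking the Kenig--Ponce--Vega commutator estimate, yielding an at-most-exponentially-growing bound which, together with the propagated stability condition and the uniform $L^\infty$ margin from the Carlson inequality, permits the continuation argument. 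Aside from these two points (and the cosmetic differences that you run the estimate at three regularity levels $s\in\{0,\tfrac12,1\}$ and invoke a Barb\u{a}lat argument where the paper reads the decay directly off the time-integrated dissipation), your outline coincides with the paper's proof.
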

\begin{proof}
Without loss of generality, we consider $\rho^+=0,$ $\rho^-=1$ and $g=1$ so that $A=-1$.   Our analysis will rely on the fact that
$$\langle h_1 \rangle <0 \,,$$
since we have assumed the initial velocity satisfies $h_1<0$.

It is convenient to introduce a new variable $f( \alpha , t)$ given by
$$
f=h- \bar{h}_0-\bar{h}_1 t \ \text{ so that } \ f_t=h_t-\bar{h}_1.
$$
It follows that $f$ satisfies the evolution equation
\begin{equation} 
f_{tt}( \alpha ,t)  + \Lambda f  +   \sigma\Lambda^3 f- \bar{h}_1 \Lambda f_t=
  \p_\alpha(H f_t f_t)  \,,\label{wavenonlinearf}
\end{equation} 
with initial data
$$
f_0=h_0- \bar{h}_0,\;f_1=h_1-\bar{h}_1.
$$
The local existence for $h$ and $h_t$ (and consequently, for $f$ and $f_t$) follows from Theorem \ref{thm1}. Thus, in order to prove that
solutions exist for all time, it remains only to establish estimates which are uniform in time, and the desired asymptotic behavior will be 
established by showing that both $f$ and $f_t$ converge to zero.

With $\lambda$  defined by \eqref{lambda2}, 
 the stability condition \eqref{stability22} for $h_t$ reduces to
\begin{equation}\label{stabilityf}
\sup_{0\leq t}\|f_t(t)\|_{L^\infty}< -\bar{h}_1.
\end{equation}
Furthermore, we have that
$$
\int _\bbT f(\alpha,t) d\alpha=0 \ \text{ and } \ \int _\bbT f_t(\alpha,t) d\alpha=0,\; \forall\, t\geq0.
$$

We test equation \eqref{wavenonlinearf} against $\Lambda f_t$ and obtain that
\begin{align*}
\frac{1}{2}\frac{d}{dt}\left(\|f_t\|_{0.5}^2+\|f\|_1^2+\sigma \|f\|_{2}^2\right)- \bar{h}_1 \|f_t\|_{1}^2&=\int_{\bbT} \p_\alpha (H f_t f_t) \Lambda f_t d\alpha\\ 
&\leq \|f_t\|_1^2(\|f_t\|_{L^\infty}+ \|Hf_t\|_{L^\infty}).
\end{align*}

We shall make use of the  refined Carlson inequality:
\begin{equation}\label{Carlson}
\|w\|_{L^\infty}^2\leq \|w\|_{0}\|w\|_{1}-\frac{1}{\pi}\|w\|_0^2 \ \ \forall w \in  \dot{H}^1( \mathbb{T} ) \ \text{ with } \ \langle w \rangle =0 \,.
\end{equation}
Using \eqref{Carlson} together with the Poincar\'e inequality shows that

$$
\|w\|_{L^\infty}<\|w\|_{1}.
$$

As a consequence, we obtain that
\begin{equation}\label{energyest}
\frac{1}{2}\frac{d}{dt}\left(\|f_t\|_{0.5}^2+\|f\|_1^2+\sigma \|f\|_{2}^2\right)- \bar{h}_1 \|f_t\|_{1}^2< 2\|f_t\|_1^3.
\end{equation}
Taking a time derivative of equation  \eqref{wavenonlinearf},  we find that $f_t$ satisfies
\begin{equation}\label{ftt}
f_{ttt}( \alpha ,t)  + \Lambda f_t  +   \sigma\Lambda^3 f_t-\bar{h}_1 \Lambda f_{tt}=
  \p_\alpha(H f_{tt} f_t+H f_t f_{tt}),
\end{equation} 
with initial data given by
$$
f_1(\alpha)=f_t(\alpha,0),\;f_2=f_{tt}(\alpha,0)=-\Lambda f_0- \sigma \Lambda^3 f_0+ \bar{h}_1\Lambda f_1 + \p_\alpha(H f_1 f_1) \,.
$$
Testing \eqref{ftt} against $\Lambda f_{tt}$, and using \eqref{Carlson}, we obtain that
\begin{align}
\frac{1}{2}\frac{d}{dt}\left(\|f_{tt}\|_{0.5}^2+\|f_t\|_1^2+\sigma \|f_t\|_{2}^2\right)- \bar{h}_1\|f_{tt}\|_{1}^2&\leq \|f_{tt}\|_1^2(\|f_t\|_{L^\infty}+ \|Hf_t\|_{L^\infty})\nonumber\\
&\quad +\|f_{tt}\|_{1}\|f_t\|_{1}(\|f_{tt}\|_{L^\infty}+ \|Hf_{tt}\|_{L^\infty})\nonumber\\
&< 4\|f_{tt}\|_1^2\|f_t\|_{1}\label{energyest2}.
\end{align}

Next, we define the following energy and dissipation functions, respectively, as
$$
E(t)=\|f_{tt}\|_{0.5}^2+\|f_t\|_1^2+\sigma \|f_t\|_{2}^2,\ \ D(t)=- 2\bar{h}_1 \|f_{tt}\|_{1}^2 \,.
$$
Then \eqref{energyest2} can be written as
$$
\frac{d}{dt}E(t)+D(t)\leq \frac{4\sqrt{E(t)}}{-\bar{h}_1 }D(t),
$$
and we find decay of the energy 
$$
E(t)+\left(1-\frac{4\sqrt{E(0)}}{-\bar{h}_1 }\right) \int_0^t D(s)ds\leq E(0),
$$
provided that the initial data satisfies
$$
E(0)<\left(\frac{-\bar{h}_1}{4}\right)^2.
$$

Consequently, due to \eqref{smallness3},  the initial data $f_0$ and $f_1$ satisfy
\begin{equation}\label{smallness}
E(0)=\|f_2\|_{0.5}^2+\|f_1\|_1^2+\sigma \|f_1\|_{2}^2<\left(\frac{-\bar{h}_1}{5}\right)^2,
\end{equation}
and we have that equations \eqref{energyest} and \eqref{energyest2} reduce to
$$
\frac{d}{dt}\left(\|f_t\|_{0.5}^2+\|f\|_1^2+\sigma \|f\|_{2}^2\right)-\bar{h}_1\|f_t\|_1^2<0,
$$
$$
\frac{d}{dt}\left(\|f_{tt}\|_{0.5}^2+\|f_t\|_1^2+\sigma \|f_t\|_{2}^2\right)-\frac{2\bar{h}_1}{5}\|f_{tt}\|_1^2<0.
$$
Thus, we find the estimates
$$
\|f_t(t)\|_{0.5}^2+\|f(t)\|_1^2+\sigma \|f(t)\|_{2}^2-\bar{h}_1\int_0^t\|f_t(s)\|_1^2ds\leq \|f_1\|_{0.5}^2+\|f_0\|_1^2+\sigma \|f_0\|_{2}^2,
$$
$$
\|f_{tt}(t)\|_{0.5}^2+\|f_t(t)\|_1^2+\sigma \|f_t(t)\|_{2}^2-\frac{2\bar{h}_1}{5}\int_0^t\|f_{tt}(s)\|_1^2ds\leq \|f_{2}\|_{0.5}^2+\|f_1\|_1^2+\sigma \|f_1\|_{2}^2.
$$
and the asymptotic behavior

\begin{equation}\label{decayf}
\limsup_{t\rightarrow \infty} \|f(t)\|_{1+\text{sgn}(\sigma)}+\|f_t(t)\|_{1+\text{sgn}(\sigma)}+\|f_{tt}(t)\|_{0.5}=0.
\end{equation}

Finally, using \eqref{Carlson} and \eqref{smallness}, we conclude that
\begin{equation}\label{smallness2}
\|f_t(t)\|_{L^\infty}^2<\|f_t(t)\|_{1}^2<\left(\frac{-\bar{h}_1}{5}\right)^2,
\end{equation}
and the stability condition \eqref{stabilityf} is satisfied even in a stricter sense. Finally, we test equation \eqref{wavenonlinearf} against $\Lambda^4 f_t$. We find that 
$$
\frac{1}{2}\frac{d}{dt}\left(\|f_t\|_{2}^2+\|f\|_{2.5}^2+\sigma \|f\|_{3.5}^2\right)- \bar{h}_1\|f_t\|_{2.5}^2=\int_\bbT \p_\alpha(f_t H f_t)\partial_\alpha^4 f_td \alpha.
$$
Using integration by parts, Sobolev embedding and interpolation, we have that
\begin{align*}
I&=\int_\bbT \p_\alpha(f_t H f_t)\partial_\alpha^4 f_t d\alpha\\
&=-\int_\bbT \p_\alpha^2(f_t H f_t)\partial_\alpha^3 f_t d\alpha\\
&=\frac{1}{2}\int_\bbT \p_\alpha^2 f_t \Lambda f_t\partial_\alpha^2 f_t d\alpha-2\int_\bbT \p_\alpha f_t \Lambda f_t\partial_\alpha^3 f_t d\alpha+\int_\bbT \Lambda^{0.5}(f_t \p_\alpha\Lambda f_t)\partial_\alpha\Lambda^{1.5} f_t d\alpha\\
&\leq C\|f_t\|_{2.25}^2\|f_t\|_1+\int_\bbT [\Lambda^{0.5},f_t]\p_\alpha\Lambda f_t\partial_\alpha\Lambda^{1.5} f_t+\int_\bbT f_t \p_\alpha\Lambda^{1.5} f_t\partial_\alpha\Lambda^{1.5} f_t d\alpha\\
&\leq C\|f_t\|_{2.5}\left(\|f_t\|_{2}\|f_t\|_1+\|\partial_\alpha f_t\|_{L^4}\|\p_\alpha \Lambda^{0.5}f_t\|_{L^4}+\|\Lambda^{0.5}f_t\|_{L^4}\|\p_\alpha^2 f_t\|_{L^4}\right)+\|f_t\|_{L^\infty}\|f_t\|_{2.5}^2\\
&\leq C\|f_t\|_{2.5}\left(\|f_t\|_{2}\|f_t\|_1+\|f_t\|_{0.75}\|f_t\|_{2.25}\right)+\|f_t\|_{L^\infty}\|f_t\|_{2.5}^2,
\end{align*}
where we have used the classical commutator estimate \cite{kenig1991well}
$$
\|[\Lambda^{0.5},u]v\|_{L^2}\leq C(\|\partial_\alpha u\|_{L^4}\|\Lambda^{-0.5}v\|_{L^4}+\|\Lambda^{0.5}u\|_{L^4}\|v\|_{L^4}).
$$
Using Young's inequality and \eqref{smallness2}, we find that
\begin{equation}\label{energyest3}
\frac{d}{dt}\left(\|f_t\|_{2}^2+\|f\|_{2.5}^2+\sigma \|f\|_{3.5}^2\right)+\delta \|f_t\|_{2.5}^2\leq C_1(h_0,h_1)\|f_t\|_2^2,
\end{equation}
for a certain explicit $0<\delta (h_0,h_1)$. Using Gronwall's inequality, we conclude that
\begin{equation}\label{energyest4}
\|f_t(t)\|_{2}^2+\|f(t)\|_{2.5}^2+\sigma \|f(t)\|_{3.5}^2\leq C_2(h_0,h_1)e^{C_1(h_0,h_1)t}.
\end{equation}
Finally, from the regularity of $f$ and $f_t$, we can conclude the regularity of $h$ and $h_t$. 
\end{proof}

\begin{remark}
Let us emphasize that the condition \eqref{smallness3} does \emph{not} require \emph{small} initial data;  rather, we require the data to be
sufficiently close to an arbitrarily large homogeneous state. For example, in the case that surface tension $\sigma=0$, we can consider
$$
h_0=A+Be^{\alpha i} \ \text{ and } \ h_1=-1000+\frac{e^{\alpha i}}{6} \,
$$ 
for constants $A$ and $B$.
A simple computation using the explicit form of $h_0$ and $h_1$ shows that the condition \eqref{smallness3} is satisfied when $B\leq 110$ and any $A$. 
\end{remark} 

\begin{remark} With  $h_2= \Lambda h_0+ \sigma \Lambda^3 h_0- \p_\alpha(H h_1 h_1) $, our asymptotic behavior requires that
$\|h_2\|_{0.5}^2+\|h_1\|_1^2+\sigma \|h_1\|_{2}^2<\left(\frac{-\bar{h}_1}{5}\right)^2$.   Let us set surface tension $ \sigma =0$, in which case
our initial position $h_0$ and initial velocity $h_1$ must be chosen so that
$$
\| \Lambda h_0- \p_\alpha(H h_1 h_1) \|_{0.5}^2+\|h_1\|_1^2 <\left(\frac{-\bar{h}_1}{5}\right)^2 \,.
$$
Thus, a constraint is placed on the $H^{1.5}$-norm of the initial position function $h_0$, even though the nonlinearity acts only on the 
velocity field $h_1$.  As can be seen in the proof of  Theorem \ref{thm2}, the need to obtain a sign on the derivative of the energy function 
leads to this constraint, but there is an interesting pointwise argument which also explains the need to constrain the size of $\Lambda h_0$.

In order to be able to continue our solution for all time, it is necessary to ensure that the initial stability condition $h_1 < 0$ is propagated,
and that we thus maintain $h_t( \cdot , t) < 0$ for all $t \ge 0$ in which case we must also have that
$$
\| \Lambda h( \cdot , t) - \p_\alpha(H h_t( \cdot ,t) \,  h_t(\cdot , t)) \|_{0.5}^2+\|h_t(\cdot,t)\|_1^2  <\left(\frac{-\bar{h}_1}{5}\right)^2 \ \ \forall t\ge 0\,.
$$

To give another (and perhaps more transparent) explanation of how this can be achieved, we introduce  the time-dependent point
$x_t$ which satisfies
$$
h_t(x_t,t)=\max_\alpha h_t(\alpha,t)\,,
$$
and we define the time-dependent function $y(t)$ by
$$
y(t)=h_t(x_t,t)-\bar{h}_1 \ \text{ so that } \ y(t) \ge 0 \,.
$$
The  Hilbert transform, acting on periodic $2\pi$-periodic functions,  is defined by
$$
\Lambda h(\alpha)=H \partial_\alpha h(\alpha)=\frac{1}{2\pi}\text{P.V.}\int_{\bbT}\frac{\partial_\alpha h(\alpha-y)}{\tan(y/2)}dy.
$$
We time-differentiate this formula and use integration-by-parts (following the computation in  \cite{granero2014global}):
\begin{align*}
\Lambda h_t(x_t)&=\frac{1}{2\pi}\text{P.V.}\int_{\bbT}\frac{\partial_y (h_t(\alpha)-h_t(\alpha-y))}{\tan(y/2)}dy\\
&=\frac{1}{4\pi}\text{P.V.}\int_{\bbT}\frac{h_t(\alpha)-h_t(\alpha-y)}{\sin^2(y/2)}dy\\
&=\frac{1}{4\pi}\text{P.V.}\int_{\bbT}\frac{h_t(x_t)-h_t(y)}{\sin^2((x_t-y)/2)}dy\\
&\geq \frac{1}{4\pi}\left(2\pi h_t(x_t)-\langle h_1\rangle\right)\\
&=\left(h_t(x_t)-\bar{h}_1\right)/2.
\end{align*}
Thus, we have that
\begin{equation}\label{gs1}
\frac{d}{dt}h_t(x_t,t)=\Lambda h(x_t,t)+\Lambda h_t(x_t,t) h_t(x_t,t)\leq \Lambda h(x_t,t)+\frac{h_t(x_t,t)-\bar{h}_1}{2}h_t(x_t,t) \,. 
\end{equation} 
From \eqref{gs1}, it then follows that
$$
\frac{dy}{dt}\leq  \Lambda h(x_t,t) +y(t)^2 +\frac{\bar{h}_1}{2} y(t)\,.
$$
Now, the condition that $h_t( \cdot , t) < 0$ is equivalent to showing that
\begin{equation}\label{gs2}
0\leq y(t)< -\bar{h}_1 \ \ \forall t \ge 0 \,.
\end{equation} 
From the ODE \eqref{gs1}, we see that in order for \eqref{gs2} to hold, we must place a size restriction on $\Lambda h(x_t,t)$.
This size restriction, in turn, requires $L^ \infty $-control of $\Lambda h(x_t,t)$;  in particular, we must have that 
$16\Lambda h(x_t,t) \le \bar{h}_1^2$.
 It is, therefore, somewhat
 remarkable that  our proof of Theorem \ref{thm2} requires only control on $\|\Lambda h_0\|_{0.5}$ rather than $\|\Lambda h_0\|_{L^\infty}$.
\end{remark} 
\begin{remark} 
Finally, we note that the asymptotic condition \eqref{decayf} remains true for higher-order Sobolev norms if the condition \eqref{smallness2} is replaced with further constraints on the initial data involving higher-order time-derivatives of $h$ evaluated at $t=0$. In particular, we do not claim that  \eqref{energyest4} is sharp. 
\end{remark}

\section{General interface parameterization with interface turn-over}

Our  $h$-model equation \eqref{wavenonlinear} for the evolution of the height function $h( \alpha ,t)$ uses a   special parameterization in which the interface $\Gamma(t)$
is constrained to be the graph $( \alpha , h(\alpha , t))$.   While this model works well in predicting the mixing layer, in the unstable regime and when 
the RT instability is initiated, the height function $h( \alpha , t)$ can only grow in amplitude.    
Our goal is to generalize the $h$-model equation \eqref{wavenonlinear} by using a general parameterization $z( \alpha ,t)$ that permits
the interface to  turn-over.   As
we will show, the ability for the wave to turn-over, rather than only grow in amplitude, provides an even more accurate prediction of the RT mixing layer,
at the expense of a slightly more complicated system of evolution equations.

We now return to the general evolution equations
 \eqref{z-dynamics} and \eqref{eq11b} and set $c(\alpha,t)=0$; hence, the interface parameterization
 $z(\alpha,t)=(z_1(\alpha,t),z_2(\alpha,t))$ evolves according to
\begin{equation}\label{z-dynamics2}
z_t(  \alpha ,t) = {\frac{1}{2\pi}}  \int \varpi( \beta) \frac{(z( \alpha ,t) - z(\beta,t))^\perp}{| z( \alpha ,t) - z(\beta,t)|^2} d\beta  \,,
\end{equation}  
while the vorticity amplitude satisfies 
\begin{align}
\varpi_t&=-\partial_\alpha\bigg{[}
 A\left|\frac{1}{2\pi} \int \varpi( \beta) \frac{(z( \alpha ,t) - z(\beta,t))^\perp}{| z( \alpha ,t) - z(\beta,t)|^2} d\beta\right|^2
 -\frac{A}{4} \frac{\varpi( \alpha ,t)^2}{|\partial_\alpha  z( \alpha ,t)|^2}
 - \frac{2\jump{p}}{\rho^++\rho^-}  -2 A g z_2 \bigg{]} \nonumber \\
& \qquad \qquad
+\frac{A}{\pi} \partial_t\bigg{[}
 \int \varpi( \beta) \frac{(z( \alpha ,t) - z(\beta,t))^\perp}{| z( \alpha ,t) - z(\beta,t)|^2}  \cdot \partial_ \alpha z(\alpha ,t)  d\beta  
\bigg{]} \,.
\label{eq11b2}
\end{align}

Using the difference quotient approximation for the derivative, 
$$
\frac{z(\alpha)-z(\beta)}{\alpha-\beta}\approx \partial_\alpha z(\alpha),
$$
we obtain the approximation
\begin{equation}\label{approx1}
{\frac{1}{2\pi}}  \int \varpi( \beta) \frac{(z( \alpha ,t) - z(\beta,t))^\perp}{| z( \alpha ,t) - z(\beta,t)|^2} d\beta \approx \frac{1}{2}H\varpi(\alpha) \frac{(\partial_\alpha z(\alpha))^\perp}{|\partial_\alpha z(\alpha)|^2}.
\end{equation} 
Substitution of \eqref{approx1} into
 \eqref{z-dynamics2} and \eqref{eq11b2}, and using the Tricomi relation \eqref{tricomi}, we obtain the general interface RT model evolution equations which
 allow for wave turn-over:
\begin{subequations}\label{z-dynamics3}
\begin{align}
z_t(  \alpha ,t) & = \frac{1}{2}H\varpi(\alpha,t) \frac{(\partial_\alpha z(\alpha,t))^\perp}{|\partial_\alpha z(\alpha,t)|^2}  \,, \label{z-dynamics2b} \\
\varpi_t(  \alpha ,t) &= -\partial_\alpha\bigg{[}\frac{A}{2} \frac{1}{|\partial_\alpha z( \alpha ,t)|^2}H\left(\varpi(\alpha,t)H\varpi( \alpha ,t)\right)
 - \frac{2\jump{p}}{\rho^++\rho^-}  -2 A g z_2 \bigg{]}\,.  \label{eq11b2b}
\end{align}
\end{subequations}
Substituting the 
time-derivative of \eqref{z-dynamics2b} into   \eqref{eq11b2b} yields 
\begin{align*}
z_{tt}&= -\Lambda\bigg{[}\frac{A}{4} \frac{1}{|\partial_\alpha  z|^2}H\left(\varpi H\varpi\right)
 - \frac{\jump{p}}{\rho^++\rho^-}  - A g z_2 \bigg{]} \frac{(\partial_\alpha z)^\perp}{|\partial_\alpha z|^2} \nonumber\\ 
 &\quad +\frac{1}{2}H\varpi\left(\frac{(\partial_\alpha z_t)^\perp}{|\partial_\alpha z|^2}-\frac{(\partial_\alpha z)^\perp 2(\partial_\alpha z\cdot \partial_\alpha z_t)}{|\partial_\alpha z|^4}\right)\,.
\end{align*}
Notice that \eqref{z-dynamics2b} implies that
\begin{equation}\label{aux1}
H\varpi=2z_t\cdot (\partial_\alpha z)^\perp\text{ and }\varpi=-2H(z_t\cdot (\partial_\alpha z)^\perp).
\end{equation}
Thus, we obtain the equivalent second-order nonlinear wave system for $z( \alpha , t)$ given by
\begin{align}
z_{tt}&= \Lambda\bigg{[}\frac{A}{|\partial_\alpha z|^2}H\left(z_t\cdot (\partial_\alpha z)^\perp H(z_t\cdot (\partial_\alpha z)^\perp)\right)
 + \frac{\jump{p}}{\rho^++\rho^-}  + A g z_2 \bigg{]} \frac{(\partial_\alpha z)^\perp}{|\partial_\alpha z|^2} \nonumber\\ 
 &\quad +z_t\cdot (\partial_\alpha z)^\perp\left(\frac{(\partial_\alpha z_t)^\perp}{|\partial_\alpha z|^2}-\frac{(\partial_\alpha z)^\perp 2(\partial_\alpha z\cdot \partial_\alpha z_t)}{|\partial_\alpha z|^4}\right)\,.\label{eqztt}
\end{align}

We shall refer to the system \eqref{z-dynamics3} or the wave equation \eqref{eqztt}  as the $z$-model.  The $z$-model \eqref{eqztt}
is analogous to the slightly simpler $h$-model \eqref{wavenonlinear}.

\begin{remark} Note well that due to our choice of setting  $c(\alpha,t)=0$,  the evolving interface solving equation \eqref{z-dynamics3} (or equivalently \eqref{eqztt}) does \emph{not} 
remain a graph,  even if the initial data $z(\alpha,0)=(\alpha,h_0(\alpha)), z_t(\alpha,0)=(0,h_1(\alpha))$ are given as graphs.
In particular, it is  convenient (especially for the purposes of comparing against the $h$-model (\ref{wavenonlinear})) to prescribe the initial interface
position as a graph, and allow the interface to evolve into a non-graph state.  As we will show, the $z$-model \eqref{z-dynamics3} captures
the turn-over of the RT interface.
\end{remark}

\section{Numerical study}\label{simus}
\subsection{The algorithm}\label{algorithm}
In order to stabilize numerical oscillations without affecting the amplitude or speed of wave propagation, we shall employ an arbitrary-order artificial viscosity operator
for both the $h$-model \eqref{nonlinear3} (or \eqref{wavenonlinear}) and  the $z$-model \eqref{z-dynamics3} (or \eqref{eqztt}). 

\subsubsection{Numerical approximation of the $h$-model}
 We first consider the $h$-model, written as a system in \eqref{nonlinear3}.  We numerically discretize the following approximation:
\begin{subequations}\label{nonlinearreg}
\begin{align} 
h_t^\epsilon & =  {\frac{1}{2}} H \varpi^\epsilon \,, \\
\varpi^\epsilon_t & = 2A g \p_ \alpha h^\epsilon  +  \frac{2 \sigma }{ \rho^++\rho^-} \p_ \alpha ^3 h^\epsilon 
 - \p_ \alpha\left( {\frac{ \varpi^\epsilon}{4\pi}} \p_ \alpha ^2 h^\epsilon\,\right)  \aoo \nonumber \\
&\qquad + {\frac{A}{4\pi}}  \p_ \alpha \Lambda \varpi^\epsilon \aoo
-{\frac{A}{2}} \Lambda(\varpi^\epsilon H \varpi^\epsilon) -\epsilon \Lambda^{s} \varpi^\epsilon\,,
\end{align} 
\end{subequations}
where $\epsilon>0$ is the artificial viscosity, and  $s\geq2$ determines the order of the artificial viscosity employed.   

Equivalently,  we have the approximation for the wave equation given by
\begin{align} 
\p_t^2 h^\epsilon +\epsilon \Lambda^{s} h_t^\epsilon&= A g \Lambda h^\epsilon  - 
 \frac{ \sigma }{ \rho^++\rho^-} \Lambda^3 h^\epsilon - A \p_\alpha(Hh_t^\epsilon h_t^\epsilon)\nonumber\\
&\qquad + \Lambda\left( {\frac{H h_t^\epsilon}{4\pi}} \p_ \alpha ^2 h^\epsilon\,\right)  \aoo+ {\frac{A}{4\pi}}  \p_ \alpha \Lambda h_t^\epsilon \aoo \,.\label{wavenonlinearreg}
\end{align} 
The term $\epsilon \Lambda^{s} h_t^\epsilon$ represents the artificial viscosity operator.  The parameter $s$ determines the order of the operator; for
example, for $s=2$, we recover the classical Laplace operator, while for $s>2$, we can study a variety of hyperviscosity operators. We believe that
this equation will be an ideal candidate for the $C$-method artificial viscosity which is localized in both space and time (see \cite{ReSeSh2013}), 
and shall implement this in future work.

We use a Fourier collocation method to solve \eqref{nonlinearreg}.   We note that for the numerical simulations that we consider herein, we choose
initial data for which $\aoo=0$, in which case the equation that we discretize is equivalent to
\begin{align} 
\p_t^2 h^\epsilon  +\epsilon \Lambda^{s} h_t^\epsilon&= A g \Lambda h^\epsilon  - 
 \frac{ \sigma }{ \rho^++\rho^-} \Lambda^3 h^\epsilon - A \p_\alpha(Hh_t^\epsilon h_t^\epsilon) \,.
\tag{\ref{wavenonlinearreg}'}
\end{align}

For $N = 0, 1,2, ...$,
we define the our Fourier approximation using our projection operator $P_N$ (introduced in Section \ref{sec:wp}), defined as
$$
P_N f( \alpha )=\sum_{-N}^N\hat{f}(k)e^{ik\alpha }  \,.
$$ 
Hence,  the mesh size of our algorithm is given by
$$
dx=\frac{2\pi}{N} \,.
$$

We make use of the following identities in frequency space:
\begin{align*} 
\widehat{\partial_\alpha^n f}(k) & =(i k)^n\hat{f}(k) \,, \\
\widehat{H f}(k) & =-i \text{sgn}(k)\hat{f}(k) \,, \\
\widehat{\Lambda f}(k)&=-i \text{sgn}(k)i k\hat{f}(k)=|k|\hat{f}(k) \,, \\
\widehat{fg}(k)&=\hat{f}*\hat{g}(k) \,.
\end{align*} 
It follows that in  frequency space,  the system \eqref{nonlinearreg} can be written as
\begin{align*} 
\frac{d}{dt} \hat{h}^\epsilon & =  {\frac{-i \text{sgn}(k)}{2}} \hat{\varpi}^\epsilon \,, \\
\frac{d}{dt}  \hat{\varpi}^\epsilon & = 2A g ik \hat{h}^\epsilon  +  \frac{2 \sigma }{ \rho^++\rho^-} (ik) ^3 \hat{h}^\epsilon 
 - \aoo ik\left( \frac{ \varpi^\epsilon}{4\pi} (-k^2 \hat{h}^\epsilon)\check{}\,\right)\hat{}   \\
&\qquad + {\frac{A}{4\pi}}  ik|k| \hat{\varpi}^\epsilon \aoo
-{\frac{A}{2}} |k|\left(\varpi^\epsilon \left(-i \text{sgn}(k) \hat{\varpi}^\epsilon\right)\check{}\right)\hat{} -\epsilon |k|^{3} \hat{\varpi}^\epsilon\,.
\end{align*} 
This is a nonlinear system of ordinary differential equations, to which we shall apply the adaptive Runge-Kutta-Fehlberg fourth-order (nominally fifth-order) scheme to advance in time-increments .

For the following  simulations, we have set the acceleration to a constant value of $g$, which will either act downward or upward, depending on
the type of RT instability that we examine.   The initial position of the interface  is specified, as well as the initial
amplitude of the vorticity, $\varpi(\alpha,0)$.

\subsubsection{Numerical approximation of the $z$-model}
 We change variables and consider that the curve $z(\alpha,t)$ is given by
$$
z(\alpha,t)=(\alpha+\delta z_1(\alpha,t),z_2(\alpha,t)),
$$
where $\delta z_1, z_2$ are periodic functions defined on $[-\pi,\pi]$.   

For our numerical simulations, we shall only consider the case of 
zero surface tension. 
We use the same Fourier-collocation method described previously (with $N$ Fourier modes) to approximate the following system of equations:
\begin{subequations}\label{nonlinearregZ}
\begin{align} 
(\delta z_1)_t^\epsilon & =  -{\frac{1}{2}} H \varpi^\epsilon \frac{\partial_\alpha z_2^\epsilon}{|(1+\p_\alpha \delta z_1^\epsilon,\partial_\alpha z^\epsilon_2)|^2} +\epsilon\p_\alpha^2 \delta z_1^\epsilon\,, \\
(z_2)_t^\epsilon & =  {\frac{1}{2}} H \varpi^\epsilon \frac{1+\partial_\alpha \delta z_1^\epsilon}{|(1+\p_\alpha \delta z_1^\epsilon,\partial_\alpha z_2^\epsilon)|^2} +\epsilon\p_\alpha^2 z_2^\epsilon\,, \\
\varpi_t^\epsilon & =-\partial_\alpha\bigg{[}\frac{A}{2} \frac{1}{|(1+\p_\alpha \delta z_1^\epsilon,\partial_\alpha z^\epsilon_2)|^2}H\left(\varpi^\epsilon H\varpi^\epsilon\right)
  -2 A g z^\epsilon_2 \bigg{]} +\epsilon \p_\alpha^2 \varpi^\epsilon\,.
\end{align} 
\end{subequations}
The system \eqref{nonlinearregZ}  employs an artificial viscosity  term $\epsilon\partial_\alpha^2$)  to stabilize small-scale noise.

\subsection{Simulation 1: $h$-model,  $\frac{\rho^+}{\rho^-}= {\frac{1}{1.5}}$ and Atwood number $A<0$}\label{sim1}  We first study the effect of the density ratio on the interface motion given by the simple RT $h$-model \eqref{nonlinearreg} in the absence of surface tension ($\sigma=0$). For our first simulation, we consider  two fluids with density ratio  $1/1.5$.

Specifically, we consider the physical parameters set to
$$
g=9.8\,m/s,\;\sigma=0,\;\rho^+=1\, kg/m^3,\;\rho^-=1.5\,kg/m^3,\text{ so }A=-0.2\,,$$
with initial data 
\begin{align} 
h(\alpha,0)&=\sin(3\alpha)\label{initialdatanum} \,,\\
\varpi(\alpha,0)&=2H\sin(2\alpha),\label{initialdatanum2} \,.
\end{align}

In order to study convergence of our scheme and the effect of the artificial viscosity operator, we perform a number of different simulations,
varying the 
total number of modes $N$ that are used,  as well as the power on the artificial viscosity operator $s$ and the size of the artificial viscosity
parameter $\epsilon$. 
In particular, we consider the following cases:
\begin{itemize}
\item $N=2^7$, $\epsilon=0.01$, $s=3$ (red solid line in the Figures \ref{fig1} and \ref{fig2})
\item $N=2^7$, $\epsilon=0.008$, $s=3$ (blue solid line with $+$ markers in the Figures \ref{fig1} and \ref{fig2})
\item $N=2^8$, $\epsilon=0.008$, $s=3$ (green dash line in the Figures \ref{fig1} and \ref{fig2})
\item $N=2^7$, $\epsilon=0.04$, $s=2$ (black dash-dot line in the Figures \ref{fig1} and \ref{fig2})
\end{itemize}
The results are shown in Figures \ref{fig1} and  \ref{fig2}.
\begin{figure}[htbp]
\centering
\includegraphics[scale=0.4]{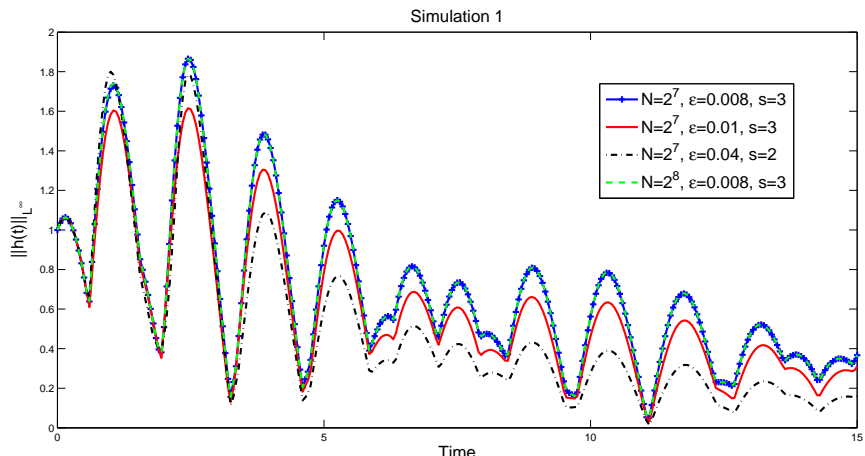}
\vspace{-.3 in}
\caption{\footnotesize{Evolution of the maximum amplitude of $|h(x,t)|$, written as $\|h(t)\|_{L^\infty}$.}}
\label{fig1}
\end{figure}

\begin{figure}[htbp]
\centering
\includegraphics[scale=0.3]{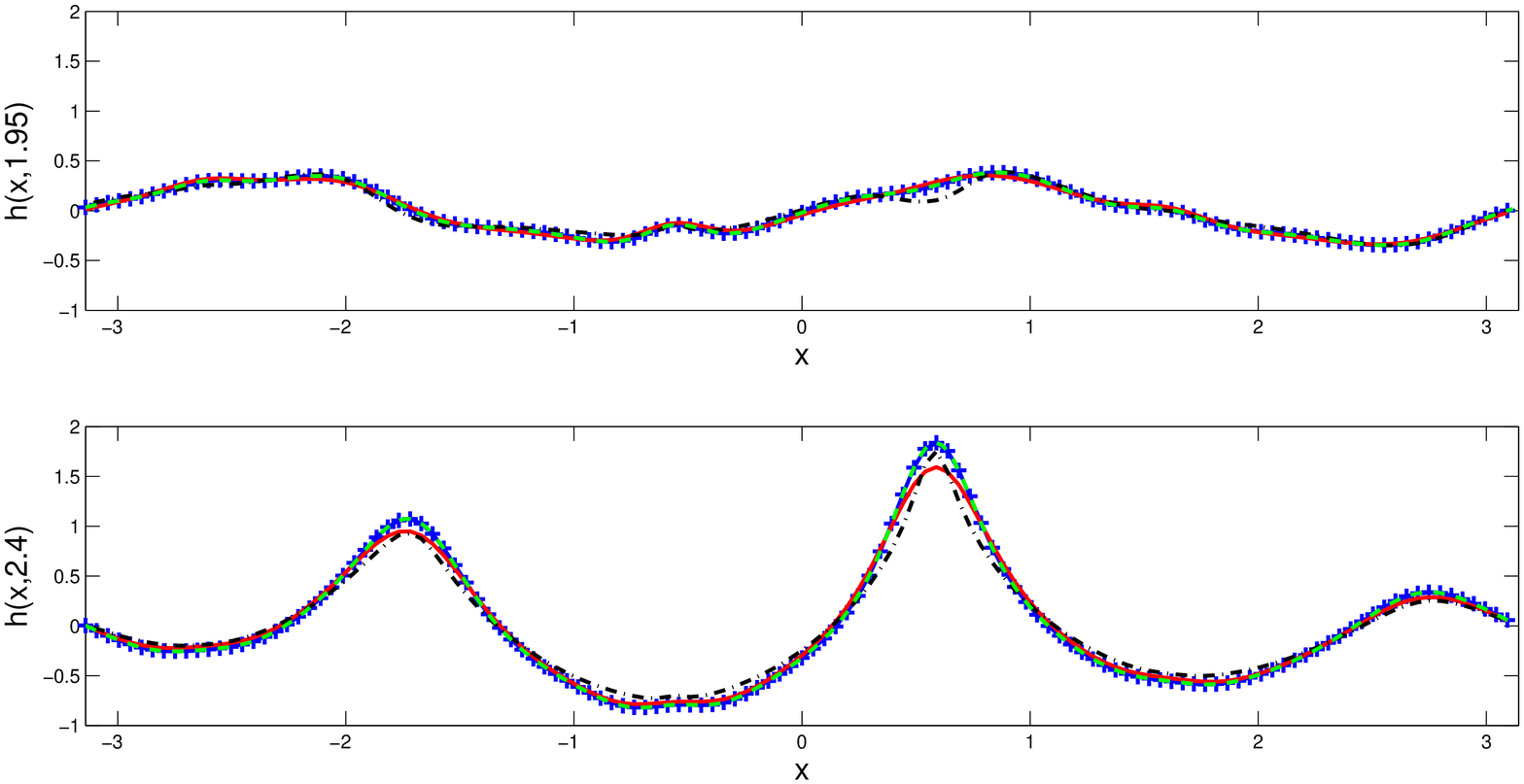}
\vspace{-.3 in}
\caption{{\footnotesize Interface position $h(x,t_j)$ for $t_1=1.95$ and $t_2=2.4$.}}
\label{fig2}
\end{figure}
Note that the results are qualitatively similar for various values of parameters, and, in particular, show the convergence of the numerical solutions with mesh size 
$N$. A comparison of the blue and the green curves in  Figures \ref{fig1} and \ref{fig2} demonstrates this nicely.

Furthermore, as shown in Figure \ref{fig1}, there is a large jump in the amplitude of the interface $\|h\|_{L^\infty}$ over a very small time scale. 
Specifically, we see that for an $O(1)$ initial interface position and velocity, the amplitude of the interface, $\|h\|_{L^\infty}$, grows
by a factor of $4$ in a time $0.45$.  
See Figure \ref{fig2} for the comparison of the interface postion $h(x,t_1)$ at time $t_1=1.95$ and interface position $h(x,t_2)$ at time $t_2=2.4$. 

%
%

\vspace{.2 in}

\subsection{Simulation 2: $h$-model,  $\frac{\rho^+}{\rho^-} = \frac{1.23}{1027}$ for Atwood number $A<0$}\label{sim2}
Next, we consider the physical parameters 
$$
g=9.8\,m/s,\;\sigma=0,\;\rho^+=1.23\, kg/m^3,\;\rho^-=1027\,kg/m^3,\text{ so }A=-0.99761.
$$
This corresponds to the density of air ($\rho^+$) and ocean water ($\rho^-$).

We consider the initial data \eqref{initialdatanum} and \eqref{initialdatanum2} for \eqref{nonlinearreg}. The artificial viscosity $\epsilon=0.05$ and the order of the artificial viscosity operator is $s=3$. The number of nodes is $N=2^7$.
\begin{figure}[htbp]
\centering
\includegraphics[scale=0.3]{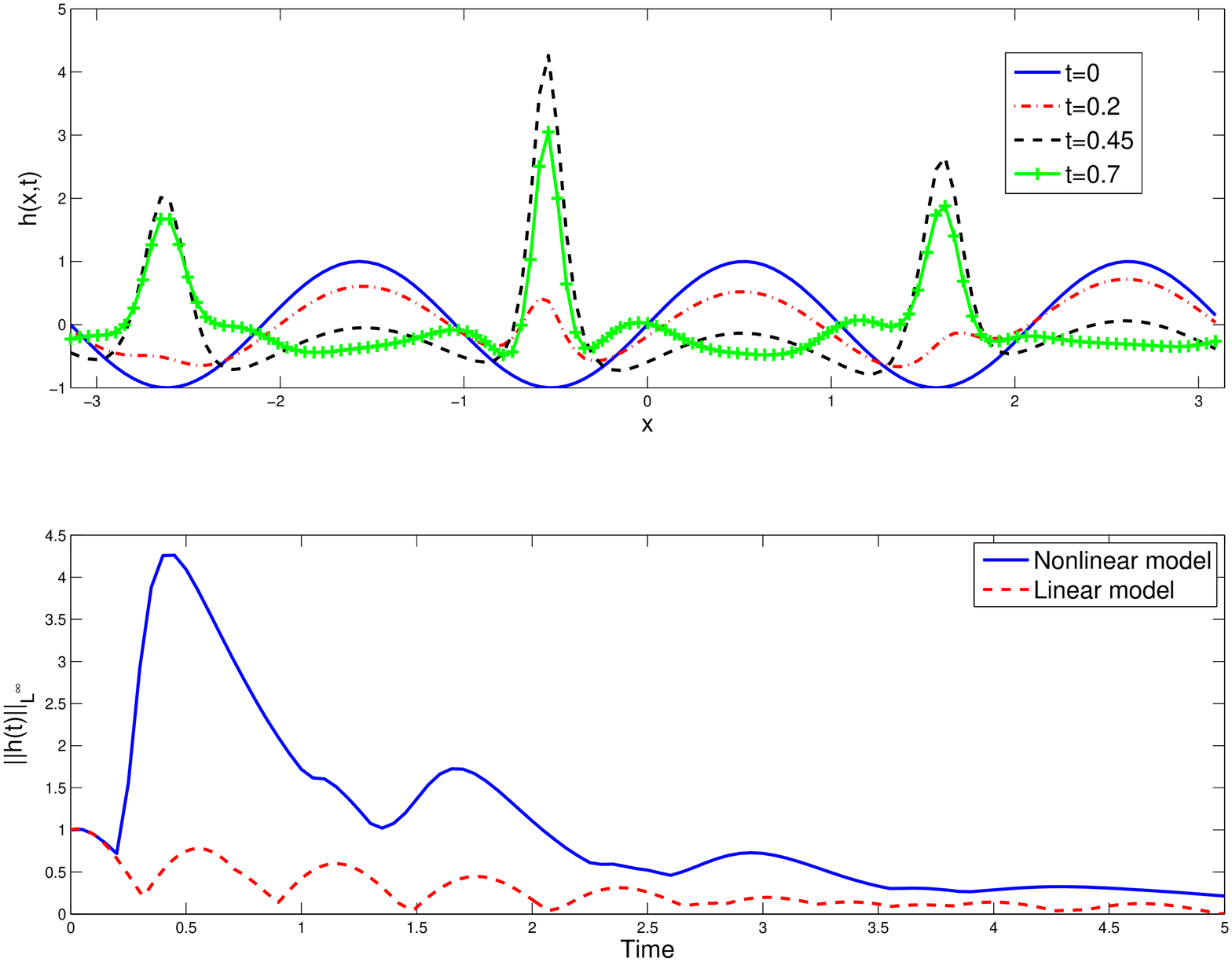}
\vspace{-.3 in}
\caption{{\footnotesize a) Interface position $h(x,t_j)$ for $t_1=0$, $t_2=0.2$, $t_3=0.45$ and $t_4=0.7$. b) $\|h(t)\|_{L^\infty}$ as a function of time.}}
\label{fig3}
\end{figure}
The results of this simulations are shown in Figure \ref{fig3}, wherein, we once again see the fast growth of the interface $\|h(t)\|_{L^\infty}$ over a short time-scale, followed, by decay to equilibrium.   The heavier fluid in this simulation, as compared to Simulation 1, reduces the frequency of oscillations, as
the interface decays to the rest state.   From the intial data,  the amplitude grows from  $0.7$ to $4.3$ in a time scale of length $0.2$ (between $t=0.2$ and $t=0.4$). After this remarkable growth, the amplitude of the interface decays  and approaches the rest state (see the blue curve in Figure \ref{fig3}).  In
order to demonstrate the role of the nonlinearity in the growth of the interface, we compare against the linear $h$-model \eqref{linear}.   As expected, the linear model simply decays the interface amplitude; see the red curve in Figure \ref{fig3}.

Finally, in Figure \ref{fig3spectrum} we plot the energy spectrum
$$
\mathcal{E}(k,t)=|\hat{h}_t(k,t)|^2-Ag|k||\hat{h}(k,t)|^2,
$$
associated to the energy law (\ref{energylaw}a) in Proposition \ref{prop2}, 
as a function of $k$ at $t_0=0, t_1=0.2, t_2=0.45$ and $t_3=0.7$.
\begin{figure}[htbp]
\centering
\includegraphics[scale=0.3]{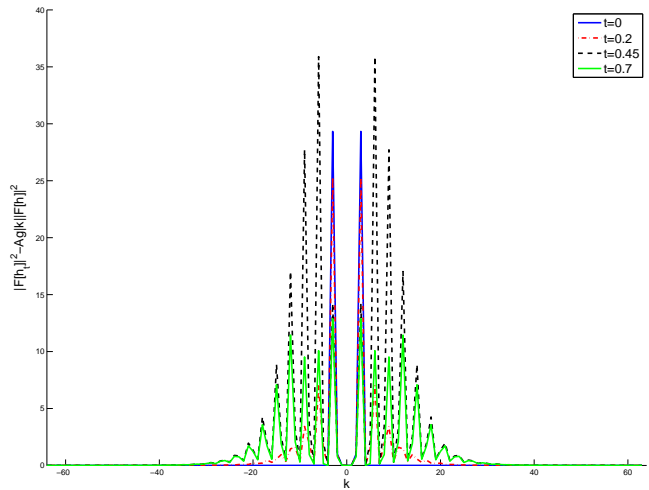}
\vspace{-.3 in}
\caption{{\footnotesize The energy spectrum $|\hat{h}_t(k,t)|^2-Ag|k||\hat{h}(k,t)|^2$ as a function of the Fourier mode $k$ at $t_0=0, t_1=0.2, t_2=0.45$ and $t_3=0.7$.}}
\label{fig3spectrum}
\end{figure}
We note that outside the Fourier modes $k\in[-50,50]$,  the energy spectrum $\mathcal{E}(k,t)$ is of order $10^{-4}$ for the time interval considered.  Starting from a mode-$3$ initial interface shape, the energy content is distributed into the smaller scales. At time $t=.45$ when the
interface is of maximum amplitude, the energy content is well-distributed among all large scales $|k|\le 20$.

%
%

\vspace{.2 in}

\subsection{Simulation 3: $h$-model,  fingering instability, Atwood number $A>0$} \label{sim3} We next simulate the highly unstable case of a heavy fluid on top of
the lighter fluid, and with the acceleration acting downward. We consider the following physical parameters:
$$
g=9.8\,m/s,\;\sigma=0,\;\rho^+=10\, kg/m^3,\;\rho^-=1\,kg/m^3,\text{ so }A=0.81818 \,.
$$

We once again use our order one initial data \eqref{initialdatanum} and \eqref{initialdatanum2} for \eqref{nonlinearreg}. The artificial viscosity operator is order $s=3$, and the artificial viscosity parameter is set to $\epsilon=0.05$. The number of nodes is $N=2^7$.

This simulation is intended to demonstrate the ability of the $h$-model to show fingering phenomenon; 
indeed, as can be seen in Figure  \ref{fig4},  the heavy fluid penetrates the lighter fluid and a strong RT 
instability is initiated.   After a large enough time interval, the absolute value of the interface grows exponentially fast.
\begin{figure}[htbp]
\centering
\includegraphics[scale=0.35]{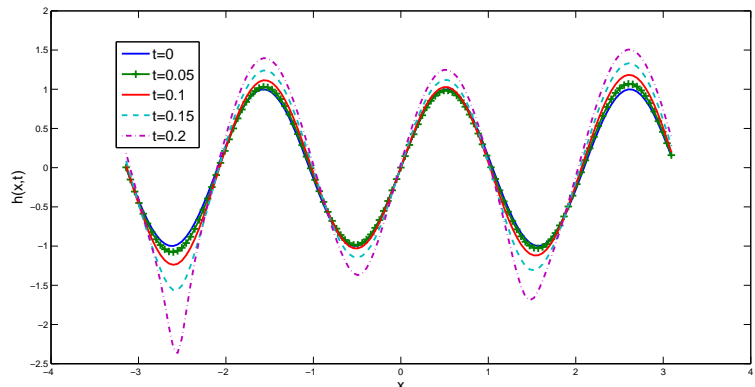}
\vspace{-.3 in}
\caption{{\footnotesize Interface position $h(x,t_j)$ for $t_1=0$, $t_2=0.05$, $t_3=0.1$, $t_4=0.15$ and $t_5=0.2$.}}
\label{fig4}
\end{figure}

%
%

\subsection{Simulation 4: $h$-model,  Stability, Atwood number $A<0$} \label{sim3b}
To capture the behavior described in Theorem \ref{thm2}, we simulate (\ref{wavenonlinearreg}') (instead of \eqref{nonlinearreg}). We consider the physical parameters
$$
g=9.8\,m/s,\;\sigma=0,\;\rho^+=0\, kg/m^3,\;\rho^-=1\,kg/m^3,\text{ so }A=-1 \,,
$$
and the initial data
$$
h_0=\frac{\cos(\alpha)}{10},\;h_1=-1+\frac{\sin(\alpha)}{10}.
$$

For this initial data, the homogeneous solution $h^\infty(t)$ 
is given by
$$
h^\infty(t)=-t.
$$
Theorem \ref{thm2} states that $h( \cdot , t) - h^ \infty (t)$ converges to zero as $ t \to\infty $.

As the initial data satisfies the stability condition,  no  artificial viscosity is required to stabilize the numerical solution, and we set $\epsilon=0$. Finally, we fix the number of Fourier modes  to $N=2^7$.
The results are plotted in Figure \ref{fig4b}, where the simulation demonstrates the asymptotic behavior of the theorem.
\begin{figure}[htbp]
\centering
\includegraphics[scale=0.3]{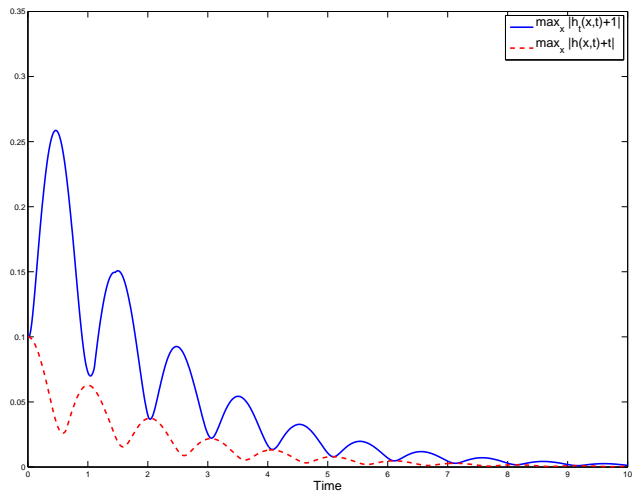}
\vspace{-.3 in}
\caption{{\footnotesize Distance from $h(x,t)$ to $h^\infty(t)$ (dashed line) and from $h_t(x,t)$ to $-1$}}
\label{fig4b}
\end{figure}

\subsection{Simulation 5: The ``rocket rig'' experiment of Read and Youngs}\label{sim4}
We consider now the situation where two (nearly) incompressible fluids, of densities $\rho^\pm$, are subjected to an approximately 
constant acceleration $g$ normal to the interface separating them, directed from the lighter fluid to the denser
fluid. We assume that the initial interface  is given by a \emph{small and random} perturbation of the flat state.

Our goal is to compare the growth rate of the mixing layer using our model equation \eqref{wavenonlinear} with that predicted by experiments and numerical simulations of 
Read \cite{Read1984} and Youngs \cite{Youngs1989}.
Experiments show that if the instability arises in the previous setting, the width of the mixed region grows like $t^2$. Actually, as shown
by Read \cite{Read1984} and Youngs \cite{Youngs1989}, the mixing region grows as
\begin{equation}\label{theoretical}
\delta Agt^2.
\end{equation}
Direct Numerical Simulation in two space dimensions by Youngs \cite{Youngs1989} has indicated that the parameter $\delta$ should range from 
0.04 to 0.05, whereas experiments of Read \cite{Read1984} suggest that  $\delta$ should range from 0.06 to 0.07.  In particular, when we have a NaI solution ($\rho^-=1.89g/cm^3$) 
and  Hexane ($\rho^+=0.66g/cm^3$) in a  tank, the empirical value is  $\delta=0.063$ (see \cite{Read1984}). Let us emphasize that both the
 numerical simulations and the physical experiments were run for approximately $70 ms$.

\subsubsection{The $h$-model} We consider the initial data for the $h$-model,  given by
\begin{align} 
h(\alpha,0)&=S\sum_{j=1}^{n}a_j\cos(jx)+b_j\sin(jx)\label{initialdatanumb}\\
\varpi(\alpha,0)&=0,\label{initialdatanum2b}
\end{align}
where $a_j,b_j$ are random numbers following a standard Gaussian distribution, and $S$ denotes a normalization constant such that
$$
\|h(0)\|_{L^2}=\frac{\pi}{100}.
$$

We consider $n=50$, $N=2^7$, and $\sigma=0$.

The acceleration
$g=-\frac{9.8\cdot2\cdot \pi}{0.3} L/s^2$ acts upwards, where  $L$  is chosen so that $2\pi/0.3 \, L=1 m$. This gravity force corresponds to the usual gravity force on the surface of the Earth of $9.8 m/s^2$. Finally, we use the artificial viscosity parameter $\epsilon=0.05$ together with a second-order artificial
viscosity operator $s=2$.    

The mixing layer is shown in Figure  \ref{fig6}, where the  interface position $h(x,t_j)$ is displayed for times $t_1=0$, $t_2=0.02$, $t_3=0.04$, $t_4=0.06$ and $t_5=0.08$.
\begin{figure}[htbp]
\centering
\includegraphics[scale=0.35]{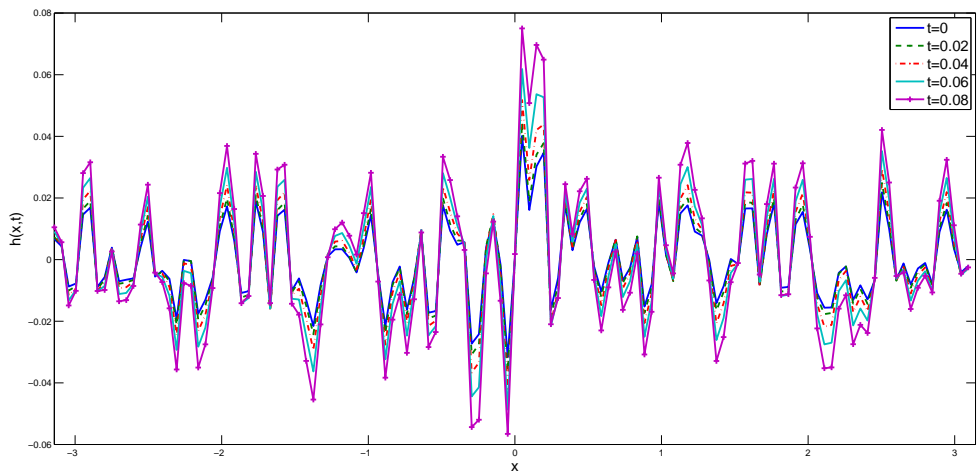}
\vspace{-.3 in}
\caption{{\footnotesize Interface position $h(x,t_j)$ for $t_1=0$, $t_2=0.02$, $t_3=0.04$, $t_4=0.06$ and $t_5=0.08$.}}
\label{fig6}
\end{figure}

In Figure \ref{fig5},
we see that up to time  $t=150 ms$,  the numerical solution provides a mixing-layer growth rate which agrees well
 with the predicted growth rate given by \eqref{theoretical}; the largest difference between the $h$-model and \eqref{theoretical} is given by
$$
\max_{0\leq t\leq 150 ms} \max_x h(x,t)-\max_x h(x,0)-0.06Ag t^2 =0.002916 \,.
$$
\begin{figure}[htbp]
\centering
\includegraphics[scale=0.25]{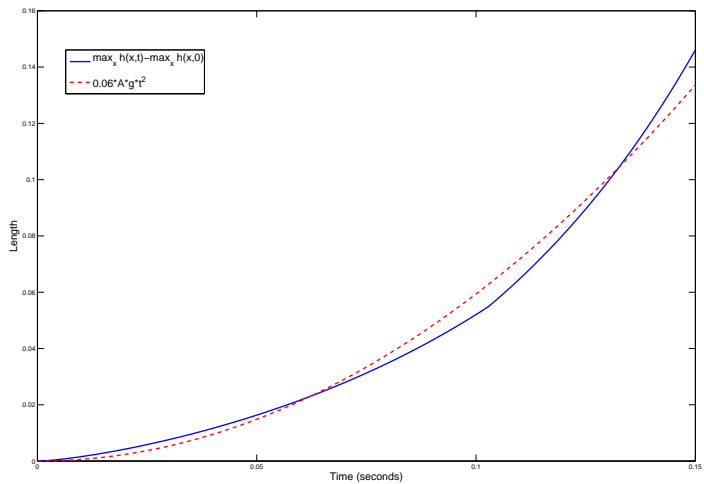}
\vspace{-.3 in}
\caption{{\footnotesize Comparison between $\max_xh(x,t)-\max_x h(x,0)$ and the predicted quadratic growth rate \eqref{theoretical} with $\delta=0.06$.}}
\label{fig5}
\end{figure}

For large times,  due to the strong RT instability present in the equations, the results of numerical simulations are very sensitive to the artificial 
viscosity parameter (see \cite{GlGrLiOhSh2001} for the artificial viscosity effects on RT mixing rates); however, for short time (meaning around $70ms$, neither viscosity nor nonlinearity plays a critical role in the evolution.  To
demonstrate this, we perform a numerical simulation of the $h$-model with zero artificial viscosity $\epsilon=0$, and with 
surface tension $\sigma=0.005$.   We use the  initial data satisfying \eqref{initialdatanumb}-\eqref{initialdatanum2b} with $n=30$, and $S$ chosen such that
$$
\|h(0)\|_{L^2}=\frac{\pi}{1000}.
$$
As can be seen in Figure \ref{fig7}, the growth rate predicted by our model agrees well with the 
Youngs' growth rate with $\delta=0.06$ up to around $t=60 ms$.   
\begin{figure}[htbp]
\centering
\includegraphics[scale=0.4]{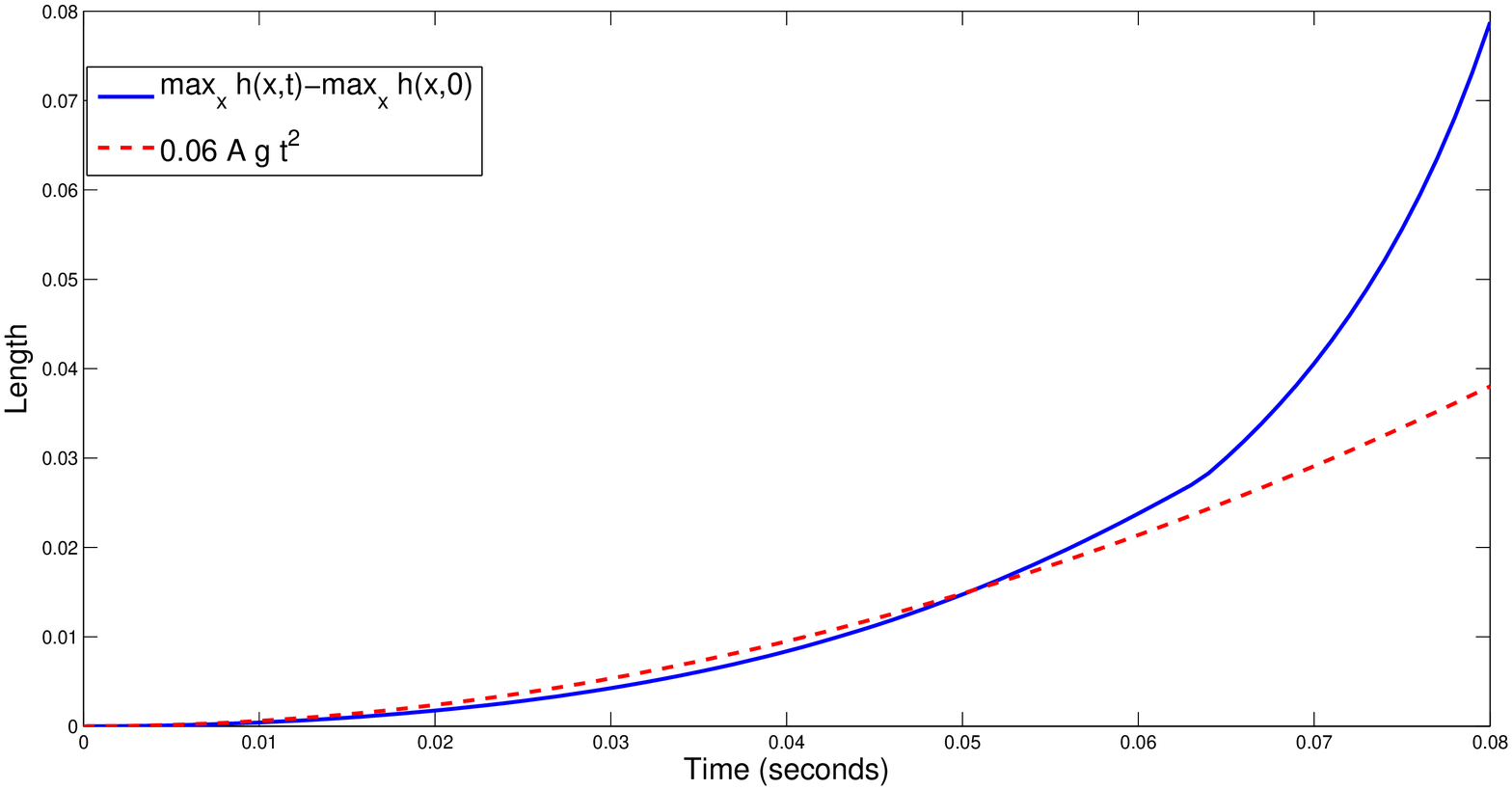}
\vspace{-.3 in}
\caption{{\footnotesize Comparison between $\max_xh(x,t)-\max_x h(x,0)$ and \eqref{theoretical} (with $\delta=0.06$) without artificial viscosity.}}
\label{fig7}
\end{figure}

\subsubsection{The $z$-model}
Now, we repeat the ``rocket rig'' experiment of Read and Youngs using the $z$-model, with initial data 
 given by
\begin{align} 
\delta z_1(\alpha,0)&=0\label{initialdatanumbZ}\\
z_2(\alpha,0)&=S\sum_{j=1}^{n}a_j\cos(jx)+b_j\sin(jx)\label{initialdatanumbZ2}\\
\varpi(\alpha,0)&=0,\label{initialdatanum2bZ}
\end{align}
where $a_j,b_j$ are random numbers following a standard Gaussian distribution, and $S$ denotes a normalization constant such that
$$
\|z_2(0)\|_{L^2}=\frac{\pi}{1000}.
$$
We consider $n=30$ and $N=2^9$. The artificial viscosity for \eqref{nonlinearregZ} has coefficient $\epsilon=0.01$.   Figure \ref{fig12b} shows
the interface evolution for the $z$-model.
\begin{figure}[htb]
\centering
\includegraphics[scale=0.3]{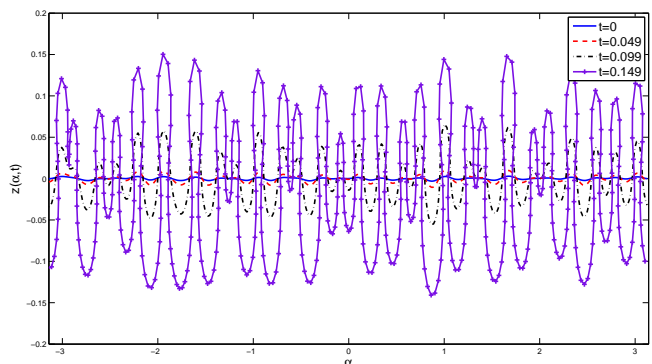}
\vspace{-.3 in}
\caption{{\footnotesize Interface position $z(\alpha,t_j)$ for $t_0=0$, $t_1=0.049$, $t_2=0.099$ and $t_3=0.149$.}}
\label{fig12b}
\end{figure}
The ability of the $z$-model parameterization to ``fatten'' and ``finger'' produces an even more accurate representation of the mixing region.

We are able to quantitatively validate this statement by comparing the growth of the mixing region of the $z$-model with the $h$-model and
the quadratic prediction \eqref{theoretical} of Read and Youngs.
For the comparison, we simulate the $h$-model with the same physical parameters,  and with second-order artificial viscosity 
with $\epsilon=0.05$, and with initial data given by
\begin{align*} 
h(\alpha,0)&=z_2\\
\varpi(\alpha,0)&=0.
\end{align*}
We note  that the artificial viscosity $ \epsilon $  is five times larger for the $h$-model  \eqref{nonlinearreg} than for the $z$-model
 \eqref{nonlinearregZ}.
As shown in Figure \ref{fig12}, the $z$-model mixing region growth rate matches very well with the quadratic prediction \eqref{theoretical}
of Read and Youngs, and for a relatively long time interval, up to $t= 150ms$.
\begin{figure}[htb]
\centering
\includegraphics[scale=0.35]{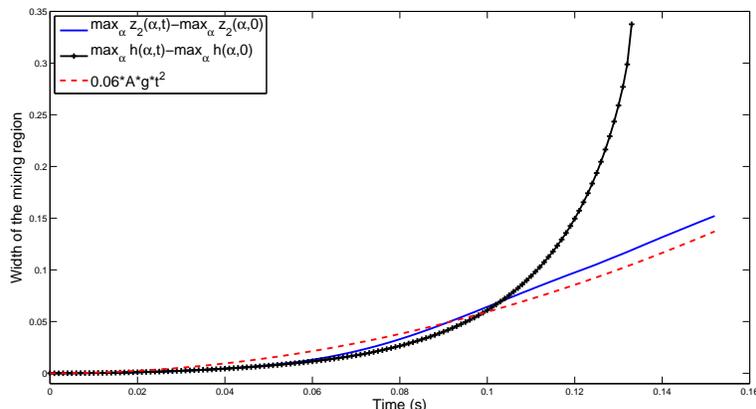}
\vspace{-.3 in}
\caption{{\footnotesize Comparison between $\max_xz_2(x,t)-\max_x z_2(x,0)$, $\max_x h(x,t)-\max_x h(x,0)$ and \eqref{theoretical} with $\delta=0.06$.}}
\label{fig12}
\end{figure}
The width of the mixing region,
approximated by
$$
\max_\alpha z_2(\alpha,t)-\max_\alpha z_2(\alpha, 0) \,,
$$
is in excellent agreement with the quadratic prediction
\eqref{theoretical}.  The maximum error for the $z$-model is
$$
\max_{0\leq t \leq 0.152}\left|\max_\alpha z_2(\alpha,t)-\max_\alpha z_2(\alpha, 0)-0.06Ag t^2\right|\approx 0.01537 \,,
$$
whereas the maximum error for the $h$-model is
$$
\max_{0\leq t \leq 0.133}\left|\max_\alpha h(\alpha,t)-\max_\alpha h(\alpha, 0)-0.06Ag t^2\right|\approx 0.23244 \,.
$$

Moreover, 
the $z$-model has a longer lifespan than the $h$-model.  Since the $h$-model is constrained to remain a graph, the amplitude of
the interface can only
grow rapidly once the RT instability is strongly initiated; on the other hand, as the $z$-model can turn-over, the instability creates a turn-over
and horizontal-fattening of the mixing region.  The comparison is shown Figure \ref{fig13}
\begin{figure}[htb]
\centering
\includegraphics[scale=0.3]{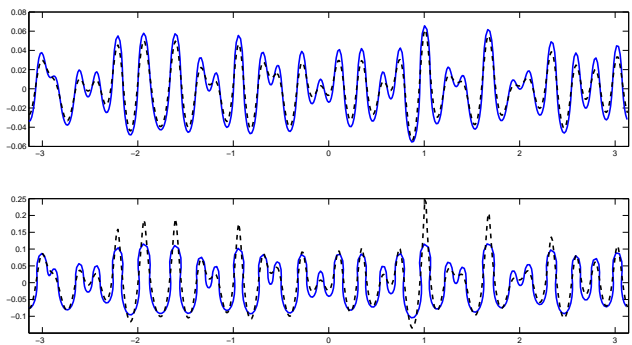}
\vspace{-.3 in}
\caption{{\footnotesize Comparison between $h(\alpha,t_j)$ (dashed line) and $z(\alpha,t_j)$ (solid line) at $t_1=0.099$ and $t_2=0.129$.}}
\label{fig13}
\end{figure}

\subsection{Simulation 6: The ``tilted rig'' experiment}\label{sim5}
Finally, we consider the ``tilted'' rig experiment of Youngs \cite{Youngs1989}, wherein the tank of  the rocket rig experiment is
titled by a small angle $\theta$ from the vertical.

Again, we have 
two (nearly) incompressible fluids, of densities $\rho^\pm$, with a vertical acceleration $g$ 
directed from the lighter fluid to the denser fluid.
As Youngs notes, 
the inclination of the initial interface results
in a gross overturning motion in addition to the
fine scale mixing.

We define $\theta=5.7^{\circ}$ and consider the unperturbed (flat) \emph{tilted} interface given by
$$
\tilde z^2_0=\left\{\begin{array}{cc}
    \tan(\theta)(x+\pi) & \text{ if } -\pi\leq x<-\pi/2,\\
   -\tan(\theta)x & \text{ if }|x|\leq\pi/2,\\
    \tan(\theta)(x-\pi)& \text{ if }\pi/2<\pi\leq \pi.
\end{array}\right.
$$
We assume that the actual initial interface is given by a \emph{small and random} perturbation so that
$$
z^2_0(x)=\tilde z^2_0(x)+S\sum_{j=1}^{n}a_j\cos(jx)+b_j\sin(jx),
$$
where $a_j,b_j$ are random numbers following a standard Gaussian distribution, and $S$ denotes a normalization constant such that
$$
\|z^2_0- \tilde z^2_0\|_{L^2}=\frac{\pi}{1000}.
$$
We fix $n=30$ and $N=2^9$.

\subsubsection{The $h$-model}
The initial data for the  $h$-model \eqref{nonlinearreg} is given by
\begin{align} 
h(\alpha,0)&=z^2_0(\alpha)\label{initialdatanumb2}\\
\varpi(\alpha,0)&=0,\label{initialdatanum2b2} \,.
\end{align}
Again, we employ a second-order artificial viscosity operator with $ \epsilon =0.25$, a rather large artificial viscosity, but necessary to 
stabilize the interface in the highly unstable RT regime.
The results are plotted in figure \ref{fig14}. 
\begin{figure}[htb]
\centering
\includegraphics[scale=0.3]{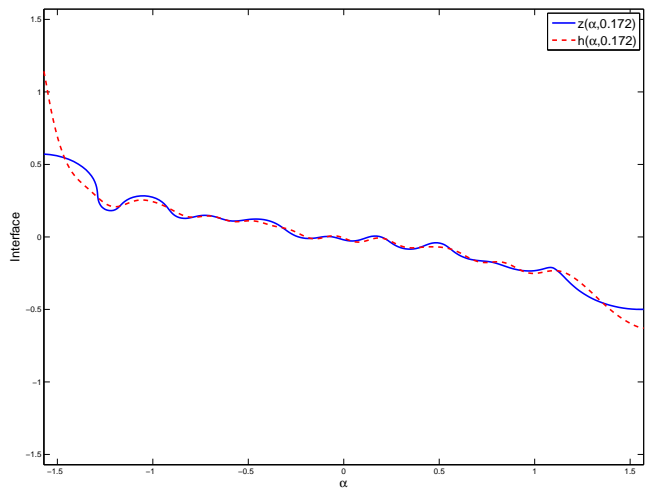}
\vspace{-.3 in}
\caption{{\footnotesize Interface position $h(\alpha,t_j)$ (dashed line) and $z(\alpha,t_j)$ (solid line) for $t=0.172$.}}
\label{fig14}
\end{figure}

\subsubsection{The $z$-model}
For the $z$-model, we use the 
 initial conditions
$$
\delta z_1(\alpha,0)=0\,, \ 
z_2(\alpha,0)=z^2_0(\alpha)\,, \ 
\varpi(\alpha,0)=0 \,,
$$
with artificial viscosity $\epsilon=0.05$.   As can be seen in Figure \ref{fig14}, as the RT instability is strongly initiated, the amplitude of the
$h$-model starts to grow unboundedly, while the $z$-model can turn-over and continue to run for a much longer time interval.

We show the time evolution of the interface for the $h$-model and $z$-model in Figure  \ref{fig15}.
\begin{figure}[htbp]
\subfigure[$h$-model]{
\label{fig15}
\includegraphics[scale=0.22]{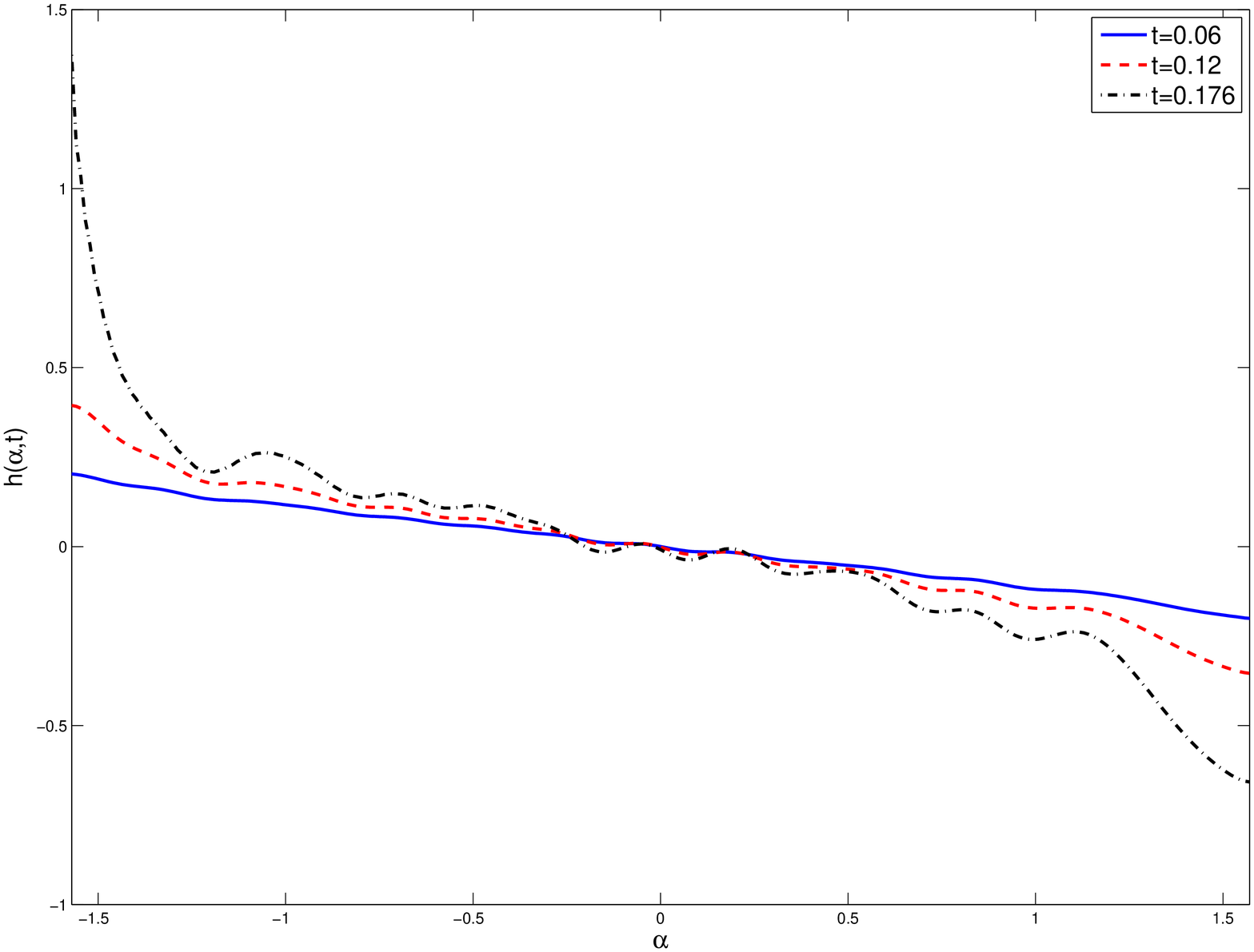}
}
\subfigure[$z$-model]{
\label{fig15a}
\includegraphics[scale=0.22]{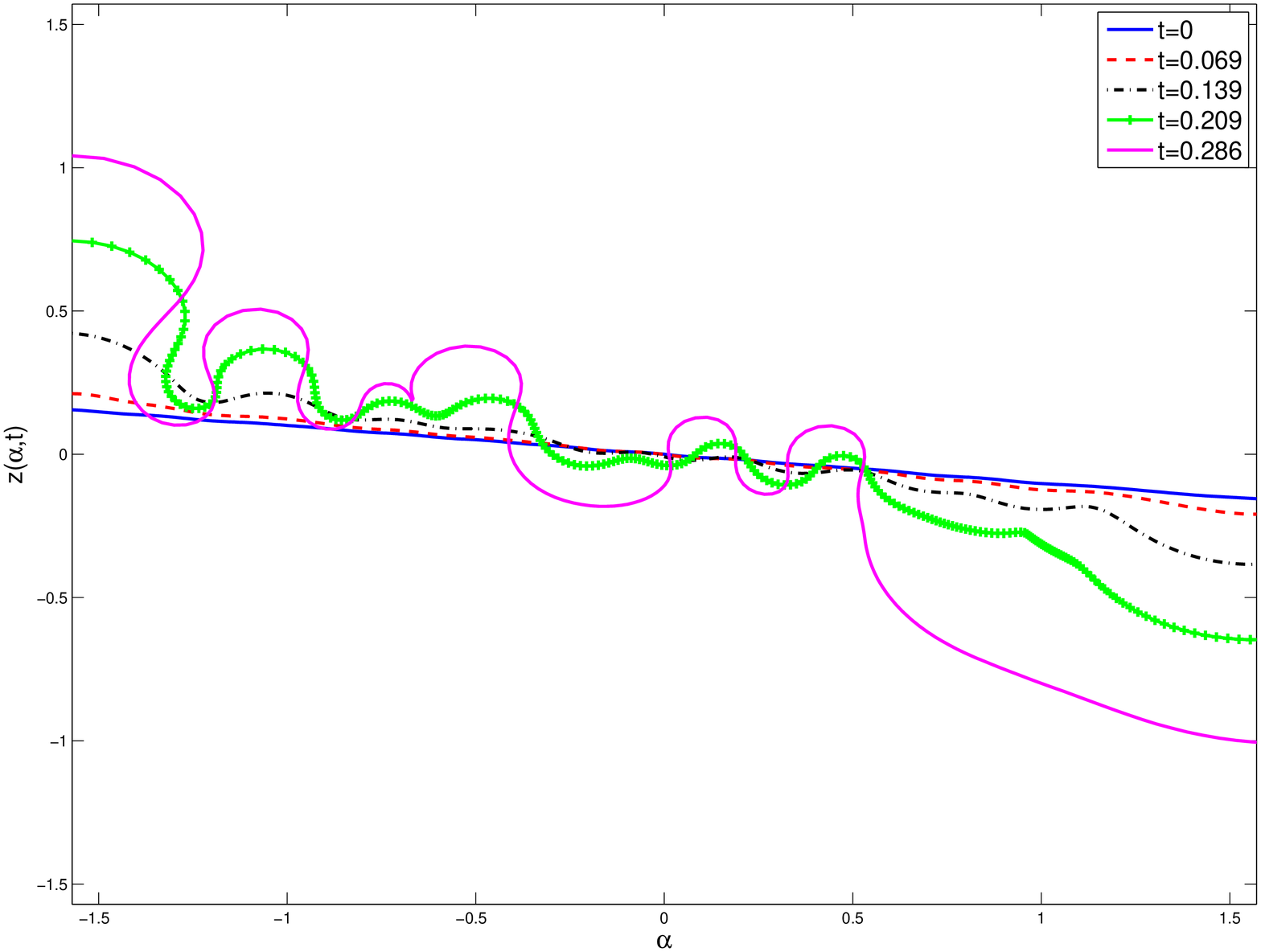}
}
\caption{{\footnotesize Interface position  for $t_0=0$, $t_1=0.069,t_2=0.139,t_3=0.209$ and $t_4=0.286$.}}

\label{fig15b}
\end{figure}
Because the $h$-model can only grow the amplitude of $h$, the behavior of the interface is more singular for the $h$-model  than for
the $z$-model. In particular, $\|h(t)\|_{L^\infty}$ grows faster than $\|z_2(t)\|_{L^\infty}$.   Moreover, the $z$-model is more stable, and permits
the use of smaller
artificial viscosity than the $h$-model.

Finally, as we have run the titled rig experiment with random initial data, it is interesting to plot the ensemble of runs, and get a clear picture of
the mixing region. In particular, as it  may be difficult to infer the qualitative description of the mixing region from only one simulation, in
Figure \ref{fig16},  we plot the  $z$-model interface location for a number of different random initial conditions at time $t=0.22$.
\begin{figure}[htbp]
\centering
\includegraphics[scale=0.3]{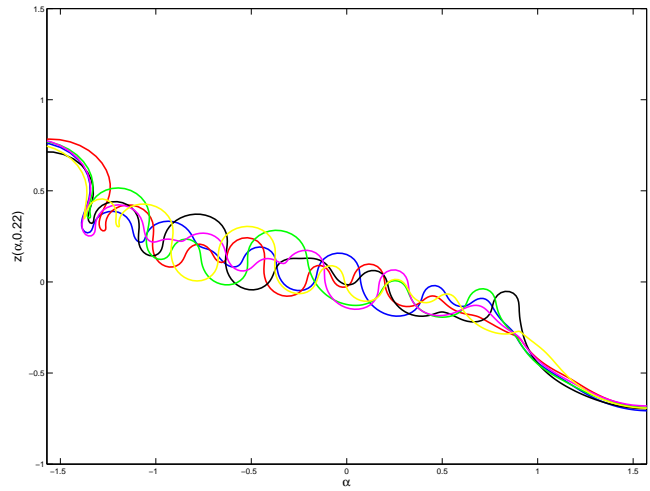}
\vspace{-.3 in}
\caption{{\footnotesize An ensemble of $z$-model interface positions for   $t=0.22$.}}
\label{fig16}
\end{figure}
As can be seen, the ensemble gives an approximation of the mixing region provided by experiment \cite{Youngs1989} and  DNS \cite{RoAn2013}.
The mixing region grows slightly faster on the left side than it drops on the right side as expected by Youngs \cite{Youngs1989}, and
has the qualitative features of the experiment.

\subsection{Simulation 7: $z$-model, Kelvin-Helmholtz instability, Atwood number $A=0$}
The $z$-model can simulate the Kelvin-Helmholtz instability arising in the case of equal densities $\rho^+=\rho^-$, equivalently $A=0$. We note that when $A=0,$ due to \eqref{z-dynamics3}, we have
$$
\varpi_t=0,
$$
and the problem reduces to describing the interface evolution for $z( \alpha , t)$  via (\ref{nonlinearregZ}a,b). We consider the initial data given by
$$
\delta z_1(\alpha,0)=-\sin(\alpha)\,, \ 
z_2(\alpha,0)=0.5\sin(\alpha)\,, 
$$
and
$$
\varpi(\alpha)=10\cos(\alpha) \,,
$$
and use $N=2^{8}$ Fourier modes.
We fix the artificial viscosity as $\epsilon=0.01$. The results are given in Figure \ref{fig17}.   The primary  effect of the nonlinearity is to increase 
the length of the interface by starting to roll-over, while keeping the curve smooth. 
\begin{figure}[htbp]
\centering
\includegraphics[scale=0.25]{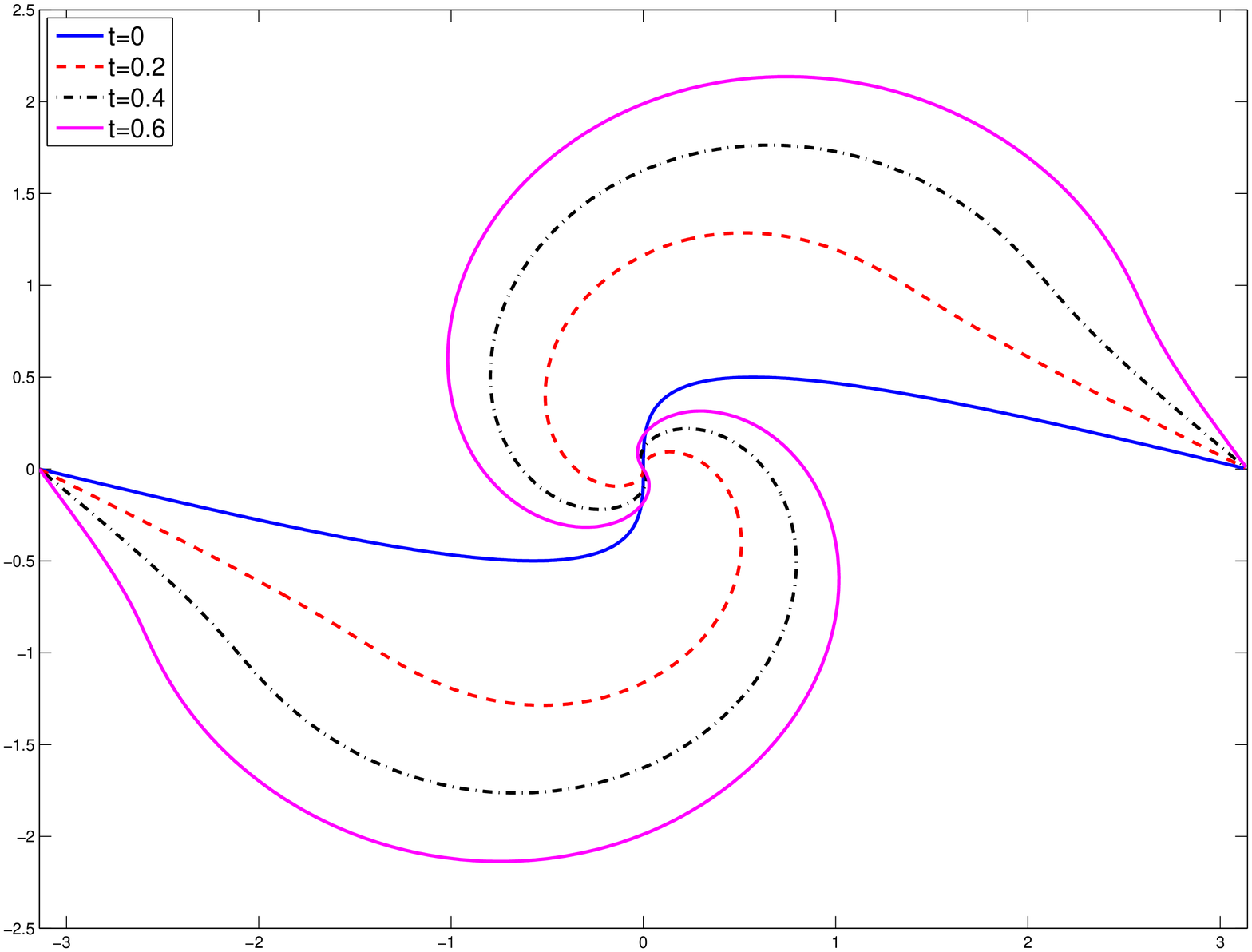}
\vspace{-.3 in}
\caption{{\footnotesize The interface $z(\alpha,t)$ at $t_0=0, t_1=0.2, t_2=0.4$ and $t_3=0.6$.}}
\label{fig17}
\end{figure}
\begin{figure}[htbp]
\centering
\includegraphics[scale=0.25]{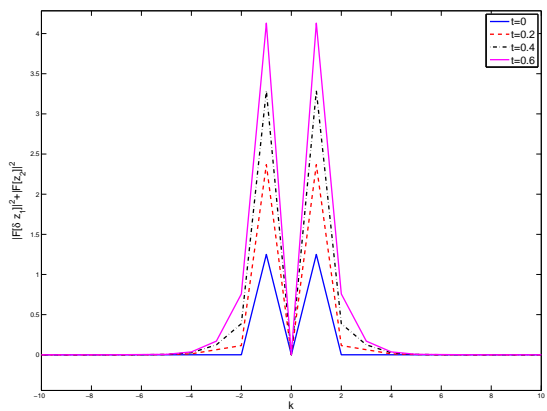}
\vspace{-.3 in}
\caption{{\footnotesize The energy spectrum $|\delta \hat{z}_1(k,t)|^2+|\hat{z}_2(k,t)|^2$ as a function of the Fourier mode $k\in [-10,10]$ at $t_0=0, t_1=0.2, t_2=0.4$ and $t_3=0.6$.}}
\label{fig18}
\end{figure}

This can be seen by looking at the spectral content of the  the $L^2$-energy function
$$
\mathcal{E}(k,t)=|\delta \hat{z}_1(k,t)|^2+|\hat{z}_2(k,t)|^2
$$
at  different instances of time.    
In Figure \ref{fig18} we plot the spectrum of $ \mathcal{E}(k,t)$ 
at times $t_0=0, t_1=0.2, t_2=0.4$ and $t_3=0.6$.   For large Fourier modes $|k| > 10$, the 
$L^2$-energy spectrum $\mathcal{E}(k,t)$ is of order $10^{-5}$ for the time interval considered, so we plot $\mathcal{E}(k,t)$ versus
$-10 \le k \le 10$.    The energy spectrum remains fairly localized about the $k=1$ initial data, with some growth in wavenumbers $|k|=2$
and $3$, which are responsible for the dilation and rotation of the wave.

\vspace{.1in}

The MATLAB code is available at 
\url{https://www.math.ucdavis.edu/~shkoller/RTcode}.

\vspace{.1in}

\noindent {\bf Acknowledgments.}  SS was supported by the National Science Foundation under grant  DMS-1301380,
and by the Department of Energy Advanced Simulation and Computing (ASC) Program.

\end{document}